\definecolor{dblue}{rgb}{0,0,0.45}
\definecolor{red}{rgb}{0.7,0,0}
\newlength\longest
\newcommand{\cM}{{\mathcal M}}
\begin{document}

\newtheorem{thm}{Theorem}[section]
\newtheorem{lem}[thm]{Lemma}
\newtheorem{prop}[thm]{Proposition}
\newtheorem{cry}[thm]{Corollary}
\theoremstyle{definition}
\newtheorem{dfn}[thm]{Definition}
\newtheorem{example}[thm]{Example}
\theoremstyle{remark}
\newtheorem{rem}[thm]{Remark}
\numberwithin{equation}{section}

\title{A Note on Inclusions of Discrete Morrey Spaces}

\author{Hendra Gunawan${}^{1,*}$, Denny I. Hakim${}^{2,\dagger}$, and Mochammad Idris${}^{3}$}
\affil{${}^{1, 2}$Analysis and Geometry Group,\\Faculty of Mathematics and Natural Sciences,\\
Bandung Institute of Technology,\\Bandung 40132, INDONESIA\\
$^{*}$Email: hgunawan@math.itb.ac.id
\\
$^{\dagger}$Email: dennyivanalhakim@gmail.com

\bigskip

${}^3$Department of Mathematics, \\Faculty of Mathematics and Natural Sciences,\\
Universitas Lambung Mangkurat,\\Banjarbaru 70714, INDONESIA
\\
Email: idemath@gmail.com 
}
\date{}

\maketitle

\begin{abstract}
We  give a necessary condition for inclusion relations between discrete Morrey spaces which can be seen as a complement
of the results in \cite{GKS,HS2}. We also prove another inclusion property of discrete Morrey spaces which can be viewed as
a generalization of the inclusion property of the spaces of $p$-summable sequences. Analogous results for weak type
discrete Morrey spaces is also presented. In addition, we show that each of these inclusion relations is proper. Some
connections between inclusion properties of discrete Morrey spaces and those of Morrey spaces are also discussed.

\bigskip

\noindent
{\bf MSC (2010):} 42B35, 46A45, 46B45.\\
\noindent{\bf Key words}: Discrete Morrey spaces, inclusion properties, Morrey spaces.

\end{abstract}

\section{Introduction}

Let $1\le p\le q<\infty$. The discrete Morrey space $\ell^p_q=\ell^p_q(\mathbb{Z})$,
introduced in \cite{GKS},
is defined to be the set of all sequences $x=(x_j)_{j\in \mathbb{Z}}$ such that
\[
\sup_{ m\in \mathbb{Z}, N\in \mathbb{N}_0}
|S_{m,N}|^{\frac{p}{q}-1}
\sum_{j\in S_{m,N}} |x_j|^p<\infty,
\]
where $\mathbb{N}_0:=\mathbb{N}\cup \{0\}$,
$S_{m,N}:=\{ m-N, \ldots, m, \ldots, m+N  \}$, and $|S_{m,N}|=2N+1$.
This space is a Banach space with the norm
\begin{equation}\label{eq:170311-1}
\|x\|_{\ell^p_q}:=
\sup_{ m\in \mathbb{Z}, N\in \mathbb{N}_0}
|S_{m,N}|^{\frac{1}{q}-\frac{1}{p}}
\left(\sum_{j\in S_{m,N}} |x_j|^p\right)^{\frac{1}{p}}.
\end{equation}
We remark that, for $p=q$, we have $\ell^p_p=\ell^p$. Moreover, it is shown in \cite{GKS} that $\ell^p$ is
a proper subset of $\ell^p_q$ whenever $1\le p<q<\infty$. In the same paper, the authors also prove that if
$1\le p_1\le p_2\le q$, then
\begin{equation}\label{eq:170320-1}
\ell^{p_2}_q \subseteq \ell^{p_1}_q.
\end{equation}
In addition, it is shown that
\[
\ell^{p_2}_{q_2} \subseteq \ell^{p_1}_{q_1}
\implies
q_2\le q_1
\]
whenever $1\le p_1\le q_1<\infty$, $1\le p_2\le q_2<\infty$, and $p_1\le p_2$.
Recently, these inclusion results are extended to discrete Morrey spaces on $\mathbb{Z}^d$ in \cite{HS2}. 
Moreover, the authors in \cite{HS2} give a necessary condition for inclusion relations of discrete Morrey spaces on $\mathbb{Z}^d$. 
They also prove that the embedding \eqref{eq:170320-1} is never compact.

One of the aims of this present paper is to give an alternative proof of a necessary condition of inclusion of discrete Morrey spaces. In particular, we
show that $p_1\le p_2$ is necessary for
the inclusion relation \eqref{eq:170320-1}. 
Besides \eqref{eq:170320-1}, we also discuss another inclusion property of discrete Morrey spaces which generalizes
\begin{equation}\label{eq:171114}
\ell^{p_1} \subseteq \ell^{p_2}
\end{equation}
for every $p_1\le p_2$, namely
$
\ell^{p_1}_{q_1 } \subseteq \ell^{p_2}_{q_2 }
$
whenever $p_1 \le  p_2$ and $\frac{  q_1}{q_2 } \le  \frac{p_1}{p_2 }$.
We prove this result by using the fact that $\ell^\infty$ contains $\ell^{p}_q$ for all $1\le p\le q<\infty$. 

We also see later that $\ell^\infty$ coincides with the space $\ell^{p}_{\infty}$ where $p<\infty$.
Here, $\ell^{p}_{\infty}$ is defined to be the set of all sequences $x=(x_j)_{j\in \mathbb{Z}}$ for which
\[
\|x\|_{\ell^p_\infty}
:=
\sup_{m\in \mathbb{Z}, N\in \mathbb{N}_0}
|S_{m,N}|^{-\frac1p}
\left( \sum_{m\in \mathbb{Z}, N\in \mathbb{N}_0}
|x_j|^p
\right)^{\frac1p}
<\infty.
\]
In addition to the results for discrete Morrey spaces, we give the corresponding results for
weak type discrete Morrey spaces in Section \ref{s22}.
The proper inclusion relation between a discrete Morrey space and its weak type is given in Section \ref{s23}.

In Section \ref{s3}, we discuss the relation between discrete Morrey spaces and `continuous' Morrey spaces
on $\mathbb{R}$. Recall that, for $1\le p\le q<\infty$, the Morrey space $\mathcal{M}^p_q(\mathbb{R}^n)$
is defined to be the set of all functions $f\in L^p_{\rm loc}(\mathbb{R}^n)$ such that the norm
\[
\|f\|_{\mathcal{M}^p_q}
:=\sup_{a\in \mathbb{R}^n, r>0}
|B(a,r)|^{\frac{1}{q}-\frac{1}{p}}
\left(
\int_{B(a,r)}
|f(y)|^p \ dy
\right)^{\frac1p}
\]
is finite.
We apply the result in \cite{KS} to recover the inclusion properties of discrete Morrey spaces
from those of Morrey spaces. We also reprove some necessary condition for the inclusion relations between
Morrey spaces in \cite{GHI} by combining the results in Section \ref{s2} and \cite{KS}. Our main result in
this section is a necessary condition for the proper inclusion relation between weak type discrete Morrey spaces
and discrete Morrey spaces.

Let us mention some previous works related to this paper. The inclusion properties of discrete Morrey spaces,
their weak type spaces and their generalization are initially studied in \cite{GKS}. The analogous results on
Morrey spaces $\cM^p_q(\mathbb{R}^n)$ can be found in \cite{GHLM,GHI,HS,P,R,ST}.
The boundedness of the Hardy-Littlewood maximal operator on discrete Morrey spaces is investigated in \cite{GS}.

Throughout this paper, we denote by $C$ a positive constant which is independent of the sequence $x=(x_j)_{j\in
\mathbb{Z}}$ and its value may be different from line to line. We write $A\lesssim B$ if there exists a positive
constant $C$ such that $A\le CB$. Meanwhile, $A\gtrsim B$ means $B\lesssim A$. In addition, we denote by $A\sim B$
if $A\lesssim B$ and $B\lesssim A$.

\section{Main results}\label{s2}

In this section, we consider three types of inclusion properties of discrete Morrey spaces.

\subsection{Inclusion property of discrete Morrey spaces}

Our first result is a necessary condition for the inclusion property of the first kind.

\bigskip

\begin{thm}\label{t1}
Let $1\le p_1\le q<\infty$ and $1\le p_2\le q<\infty$.
If $\ell^{p_2}_q\subseteq \ell^{p_1}_q$, then $p_1\le p_2$.
\end{thm}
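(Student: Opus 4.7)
My plan is to argue by contradiction. Assume $p_1>p_2$ and seek to contradict the inclusion by constructing sequences $x^{(K)}$ with $\|x^{(K)}\|_{\ell^{p_2}_q}$ uniformly bounded but $\|x^{(K)}\|_{\ell^{p_1}_q}\to\infty$. Since $\ell^{p_1}_q$ and $\ell^{p_2}_q$ are Banach spaces on which the coordinate functionals are continuous, the closed graph theorem upgrades the set-theoretic inclusion $\ell^{p_2}_q\subseteq \ell^{p_1}_q$ to a bounded embedding, so there exists $C>0$ with $\|x\|_{\ell^{p_1}_q}\le C\|x\|_{\ell^{p_2}_q}$ for every $x$. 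The contradiction would then follow from the test sequences. The case $p_2=q$ is degenerate: then $p_1\le q=p_2$ holds automatically, so I will assume $p_2<q$ throughout.

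The test sequences I have in mind are indicators of arithmetic progressions. For integers $k,l\ge 1$, let $x^{(k,l)}$ be the sequence equal to $1$ on $\{0,l,2l,\dots,(k-1)l\}$ and $0$ elsewhere. A short estimation shows
\[
\|x^{(k,l)}\|_{\ell^p_q}\sim \max\bigl(1,\,k^{1/q}l^{1/q-1/p}\bigr)
\]
for $1\le p\le q<\infty$. The key observation is that any window $S_{m,N}$ contains at most $\lceil(2N+1)/l\rceil$ points of the progression, so one checks three cases: $|S_{m,N}|\ge (k-1)l$ (captures the whole support), $l\le|S_{m,N}|<(k-1)l$ (partial capture), and $|S_{m,N}|<l$ (at most one point). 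All three cases yield the claimed bound, with equality (up to constants) attained by centering the window on the support.

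Now I specialize to $l=K$ and $k=\lfloor K^{q/p_2-1}\rfloor$, which is at least $1$ for $K$ large because $p_2<q$. A direct substitution gives $k^{1/q}K^{1/q-1/p_2}= K^{1/p_2-1/q}\cdot K^{1/q-1/p_2}=1$, so $\|x^{(k,K)}\|_{\ell^{p_2}_q}\sim 1$ uniformly in $K$. On the other hand, $k^{1/q}K^{1/q-1/p_1}\sim K^{1/p_2-1/p_1}$, and since $p_1>p_2$ this exponent is strictly positive, forcing $\|x^{(k,K)}\|_{\ell^{p_1}_q}\to\infty$ as $K\to\infty$. This contradicts the bound $\|x\|_{\ell^{p_1}_q}\le C\|x\|_{\ell^{p_2}_q}$ and completes the argument.

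The main technical obstacle will be the two-sided estimate for $\|x^{(k,l)}\|_{\ell^p_q}$; specifically, identifying which windows $(m,N)$ realize the supremum and showing that all other windows contribute no more than the two extremes (a single point, or the whole support). Once that estimate is in hand, the rest is bookkeeping with exponents. An alternative phrasing that avoids the closed graph theorem is to note that $\|x^{(k,K)}\|_{\ell^{p_2}_q}\le C$ puts the sequences into $\ell^{p_2}_q$, while the divergence of $\|x^{(k,K)}\|_{\ell^{p_1}_q}$ shows they escape every ball of $\ell^{p_1}_q$, which is already incompatible with set-theoretic inclusion by the standard gliding hump construction obtained from rescaled disjoint translates of the $x^{(k,K)}$.
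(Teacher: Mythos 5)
Your argument is sound, and it takes a genuinely different route from the paper. The paper proves the contrapositive by exhibiting a \emph{single} explicit sequence in $\ell^{p_2}_q\setminus\ell^{p_1}_q$: a lacunary arrangement in which the $k$-th block consists of roughly $2^{k(v-2)}$ ones spaced $2^{kw}$ apart near $2^{k(v+w)}$, with $v,w$ chosen so that the cumulative counts grow too fast for the $\ell^{p_1}_q$-norm to stay finite yet slowly enough for the $\ell^{p_2}_q$-norm to remain bounded; the delicate part is checking membership in $\ell^{p_2}_q$ for windows $S_{0,N}$ that straddle many blocks at once. You instead produce a \emph{family} of arithmetic-progression indicators $x^{(k,l)}$ --- essentially the paper's individual blocks in isolation --- normalize via $k\approx K^{q/p_2-1}$ so that $\|x^{(k,K)}\|_{\ell^{p_2}_q}\sim 1$ while $\|x^{(k,K)}\|_{\ell^{p_1}_q}\sim K^{1/p_2-1/p_1}\to\infty$, and then invoke the closed graph theorem (legitimate here: both spaces are Banach and each is continuously embedded in $\ell^\infty$ via the singleton windows $S_{m,0}$, so the graph of the inclusion is closed) to turn the set-theoretic inclusion into a bounded one and reach a contradiction. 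Your exponent bookkeeping is correct, your two-sided estimate $\|x^{(k,l)}\|_{\ell^p_q}\sim\max\bigl(1,k^{1/q}l^{1/q-1/p}\bigr)$ is right and provable exactly by the three-case window analysis you describe, and the degenerate case $p_2=q$ is correctly set aside. What your approach buys is that each test sequence is finitely supported, so no global summability needs to be verified; the cost is the appeal to completeness and the closed graph theorem, which the paper's construction avoids entirely. Your closing remark about gluing rescaled, widely separated translates into one sequence would, if carried out, essentially reproduce the paper's proof (with all amplitudes equal to one), so the two arguments are really the ``uniform boundedness'' and ``gliding hump'' faces of the same counterexample.
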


\bigskip

\begin{rem}
As mentioned before,  a similar result for discrete Morrey spaces on
$\mathbb{Z}^d$ can be found in \cite{HS2}.
Here, we give a different proof of the necessary condition for this inclusion property.
\end{rem}

\begin{proof}
Assume to the contrary that $1\le p_2<p_1 <q$.
Choose $v, w \in \mathbb{N}$ such that
\begin{equation}\label{eq:140918-1}
\left( \frac{q}{p_1}-1 \right)w +\frac{2q}{p_1}<v< \left( \frac{q}{p_2}-1 \right)w +2.
\end{equation}
Let $k_0$  be the smallest positive integer such that $\displaystyle 1- \frac{1}{2^{2k_0}}>  \frac{1}{2^{ v+w-1 }}$.
Define
$x=(x_j)_{j \in \mathbb{Z}}$ by
\begin{equation}\label{eq:2}
x_j:=
\begin{cases}
1, &\quad |j|=0, 1, 2, \ldots, 2^{v+w}
\\
1, &\quad |j|=2^{k(v+w)}, 2^{k(v+w)}- 2^{kw}, 2^{k(v+w)}-2 (2^{kw}), 2^{k(v+w)}-3(2^{kw}),\\
&\quad \,\,\,\,\,\, \,\,\,\,\,\, \,\,\,\,\,\,\,\,  \ldots,  2^{k(v+w)}- (2^{k(v-2)})(2^{kw}), \ {\rm where }\
k \in \mathbb{N}\cap [k_0, \infty)
\\
0, &\quad  {\rm otherwise}.
\end{cases}
\end{equation}
Let $n\in \mathbb{N} \cap [k_0, \infty)$.
Observe that, for every $1\le p<\infty$ we have
\begin{equation}\label{eq:140918-1a}
\sum_{|i| \le 2^{n (v+w)}}  |x_i|^p = \left(1+2^{ v+w+1} + 2\sum_{k=k_0}^n (1+2^{k(v-2)})\right) \sim  2^{n(v-2)}
\end{equation}
Therefore, for  $p=p_1$, we have
\begin{align}\label{eq:140918-2}
|S_{0, 2^{n(v+w)}}|^{\frac{1}{q}-\frac{1}{p_1}}
\left(
\sum_{j\in S_{0, 2^{n(v+w)}}} |x_j|^{p_1}
\right)^{\frac{1}{p_1}}
&\sim
(2\cdot 2^{n(v+w)}+1)^{\frac{1}{q}-\frac{1}{p_1}} 2^{ \frac{n(v-2)}{p_1} }
\nonumber
\\
&\ge
(3 \cdot 2^{n(v+w)})^{\frac{1}{q}-\frac{1}{p_1}} 2^{ \frac{n(v-2)}{p_1} }
\nonumber
\\
&
=3^{\frac{1}{q}-\frac{1}{p_1}}  2^{n\left( \frac{v+w}{q}-\frac{w}{p_1}-\frac{2}{p_1} \right)}.
\end{align}
According to \eqref{eq:140918-1}, we see that $\displaystyle \frac{(v+w)}{q}-\frac{w}{p_1}-\frac{2}{p_1}>0$,
so that $2^{n\left( \frac{(v+w)}{q}-\frac{w}{p_1}-\frac{2}{p_1} \right)} \to \infty$ as $n \rightarrow \infty$.
Combining this with
\eqref{eq:140918-2}, we have
$$
\|x\|_{\ell^{p_1}_q} \gtrsim \sup_{n\in \mathbb{N}} 2^{n\left( \frac{(v+w)}{q}-\frac{w}{p_1}-\frac{2}{p_1} \right)} = \infty,
$$
which tells us that $x\notin \ell^{p_1}_q$.

We shall show that $x\in \ell^{p_2}_q$.
For every $n\in \mathbb{N} \cap [k_0, \infty)$, define $m_n:=2^{n(v+w)}-2^{n(v+w-2)-1}$ and let
$N_n\in \{0, 1, 2,  \ldots, 2^{n(v+w-2)-1} \}$. We also define
\[
\ell_n:=\max \{ \ell \in \{0, 1, 2, \ldots, 2^{n(v-2)-1}\}: 2^{nw} \ell \le N_n \}.
\]
Observe that
\begin{align}
|S_{m_n, N_n}|^{\frac{1}{q}-\frac{1}{p_2}}
\left(
\sum_{j\in S_{m_n, N_n}} |x_j|^{p_2}
\right)^{\frac{1}{p_2}}
&=(2N_n+1)^{\frac{1}{q}-\frac{1}{p_2}}
\left(
1+2 \ell_n
\right)^{\frac{1}{p_2}}
\nonumber
\\
&\lesssim
(2^{nw} \ell_n)^{\frac{1}{q}-\frac{1}{p_2}} (4\ell_n)^{\frac{1}{p_2}}
\nonumber
\\
&= 4^{\frac{1}{p_2}}
2^{\frac{nw}{q}-\frac{nw}{p_2}} (\ell_n)^{\frac{1}{q}}
\lesssim 2^{n\left( \frac{v+w-2}{q}-\frac{w}{p_2}  \right)}.
\end{align}
Since  $\frac{(v+w-2)}{q}-\frac{w}{p_2}<0$, we have
\begin{equation}\label{eq:3719-1}
|S_{m_n, N_n}|^{\frac{1}{q}-\frac{1}{p_2}}
\left(
\sum_{j\in S_{m_n, N_n}} |x_j|^{p_2}
\right)^{\frac{1}{p_2}}
\lesssim 1.
\end{equation}
If $N\le 2^{k_0(v+w)}$, then
\begin{equation}\label{eq:300918-1}
|S_{0, N}|^{\frac{1}{q}-\frac{1}{p_2}}
\left(
\sum_{j\in S_{0, N}} |x_j|^{p_2}
\right)^{\frac{1}{p_2}}
\le
\left(
\sum_{j\in S_{0, 2^{k_0(v+w)}}} |x_j|^{p_2}
\right)^{\frac{1}{p_2}}
=C2^{\frac{k_0(v-2)}{p_2}}.
\end{equation}
On the other hand, if $N>2^{k_0(v+w)}$, then there exists $n\in \mathbb{N}$ such that
$$
2^{n(v+w)}<N<2^{(n+1)(v+w)}.
$$
Hence we obtain
\begin{align}\label{eq:300918-2}
|S_{0,N}|^{\frac{1}{q}-\frac{1}{p_2}}
\left(
\sum_{j\in S_{0,N}}|x_j|^{p_2}
\right)^{\frac{1}{p_2}}
&\le
|S_{0, 2^{n(v+w)}}|^{\frac{1}{q}-\frac{1}{p_2}}
\left(
\sum_{j\in S_{0,2^{(n+1)(v+w)}}}|x_j|^{p_2}
\right)^{\frac{1}{p_2}}
\nonumber
\\
&\le C \left( 2^{n(v+w)} +1 \right)^{\frac{1}{q}-\frac{1}{p_2}}
2^{\frac{(n+1)(v-2)}{p_2}}
\nonumber
\\
& \le  C 2^{n\left( \frac{w}{q}-\frac{w}{p_2}+\frac{v }{q}-\frac{2 }{p_2} \right)}.
\end{align}
Note that \eqref{eq:140918-1} implies $\displaystyle \frac{w}{q}-\frac{w}{p_2}+\frac{v }{q}-\frac{2 }{p_2}<0$.
As a consequence of this inequality and \eqref{eq:300918-2}, we have
\begin{equation}\label{eq:300918-3}
|S_{0,N}|^{\frac{1}{q}-\frac{1}{p_2}}
\left(
\sum_{j\in S_{0,N}}|x_j|^{p_2}
\right)^{\frac{1}{p_2}}
\lesssim 1.
\end{equation}
Combining \eqref{eq:3719-1} and
\eqref{eq:300918-3}, we get
\[
\sup_{m\in \mathbb{Z}, N\in \mathbb{N}_0}
|S_{m, N}|^{\frac{1}{q}-\frac{1}{p_2}}
\left(
\sum_{j\in S_{m, N}} |x_j|^{p_2}
\right)^{\frac{1}{p_2}} \lesssim 1,
\]
which means that $\|x\|_{\ell^{p_2}_{q}}<\infty$.
Hence, $x\in \ell^{p_2}_q$ but $x\notin \ell^{p_1}_q$.
This contradicts $\ell^{p_2}_q\subseteq \ell^{p_1}_q$.
\end{proof}
\begin{rem}
Note that the condition $p_1 \le p_2$ and $q_2 \le q_1$ is not a necessary condition for the inclusion
$\ell^{p_2}_{q_2} \subseteq \ell^{p_1}_{q_1}$ because
$\ell^{p_2}_{p_2} \subseteq \ell^{p_1}_{p_1}$ when $p_2<p_1$.
\end{rem}

We now  move on to the inclusion property of the second kind.
As a preparation, we show that  $\ell^\infty$ contains $\ell^p_q$ where $1\le p\le q<\infty.$
Recall that $\ell^\infty = \ell^\infty  ( \mathbb{Z} ) $ is the set of all sequences  $x=(x_j)_{j\in \mathbb{Z}}$ such that
\[
\| x \|_{\ell^\infty }:=\sup_{j \in \mathbb{Z} } |x_j|< \infty.
\]

\begin{thm}\label{t1b}
If  $1\le p \le q <\infty$, then
(i) $\ell^{p}_{q } \subset  \ell^{\infty}$ and
(ii) $\ell^{p}_{\infty } = \ell^{\infty}.$
Moreover, the inclusion $  \ell^{p}_{q } \subset  \ell^{\infty}$ is proper.
\end{thm}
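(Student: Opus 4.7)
All three claims follow essentially by unwinding the definitions; the unifying observation is that the single-point window $N=0$ is admissible in both suprema, and with $m=j$ it reduces $S_{m,N}$ to $\{j\}$, so the quantity controlled equals $|x_j|$. For the reverse containment in (ii), the exponent $-\tfrac1p$ in $\|\cdot\|_{\ell^p_\infty}$ is tailored to absorb the trivial upper bound $\sum_{j\in S_{m,N}}|x_j|^p\le|S_{m,N}|\,\|x\|_{\ell^\infty}^p$. For properness, the constant sequence serves as a clean witness.

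For (i), I plug $m=j$, $N=0$ into \eqref{eq:170311-1}: since $|S_{j,0}|=1$ and the sum is $|x_j|^p$, the expression under the supremum equals $|x_j|$, yielding $|x_j|\le\|x\|_{\ell^p_q}$ for every $j\in\mathbb Z$ and hence $\|x\|_{\ell^\infty}\le\|x\|_{\ell^p_q}$, which gives $\ell^p_q\subseteq\ell^\infty$.

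For (ii), the same $N=0$ trick applied to the $\ell^p_\infty$ norm gives $\|x\|_{\ell^\infty}\le\|x\|_{\ell^p_\infty}$, so $\ell^p_\infty\subseteq\ell^\infty$. Conversely, if $x\in\ell^\infty$, then for every admissible $(m,N)$,
$$|S_{m,N}|^{-1/p}\Bigl(\sum_{j\in S_{m,N}}|x_j|^p\Bigr)^{1/p}\le|S_{m,N}|^{-1/p}\bigl(|S_{m,N}|\,\|x\|_{\ell^\infty}^p\bigr)^{1/p}=\|x\|_{\ell^\infty},$$
so taking the supremum yields $\|x\|_{\ell^p_\infty}\le\|x\|_{\ell^\infty}$ and hence equality of the two spaces.

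For properness, I take the constant sequence $x_j\equiv 1$, which is obviously in $\ell^\infty$. Evaluating the $\ell^p_q$-quantity at $m=0$ gives
$$|S_{0,N}|^{\frac1q-\frac1p}\Bigl(\sum_{j\in S_{0,N}}|x_j|^p\Bigr)^{1/p}=(2N+1)^{\frac1q-\frac1p}(2N+1)^{\frac1p}=(2N+1)^{\frac1q},$$
which tends to infinity as $N\to\infty$ because $q<\infty$. Thus $x\notin\ell^p_q$, proving the inclusion $\ell^p_q\subset\ell^\infty$ is strict. There is no real obstacle here; the only observation needed is that the $N=0$ choice is the common thread behind (i) and (ii), and that a constant sequence is the most economical witness for strictness.
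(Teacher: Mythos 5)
Your proof is correct and follows essentially the same route as the paper: the $N=0$ window giving $\|x\|_{\ell^\infty}\le\|x\|_{\ell^p_q}$ (and likewise for $\ell^p_\infty$), the trivial bound $\sum_{j\in S_{m,N}}|x_j|^p\le|S_{m,N}|\|x\|_{\ell^\infty}^p$ for the reverse containment in (ii), and the constant sequence $y_j\equiv 1$ with $\|y\|_{\ell^p_q}=\sup_N(2N+1)^{1/q}=\infty$ as the witness for properness.
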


\bigskip

\begin{rem}
The inclusion $\ell^{p}_{q } \subset  \ell^{\infty}$ is obtained in \cite{KS}.
For the reader's convenience, we provide our proof and also show that the inclusion is proper.
\end{rem}

\begin{proof}
(i) Suppose that $1\le p\le q<\infty$. Take an arbitrary  $x \in \ell^{p}_{q }$.
Note that for every $m \in \mathbb{Z}$, we have
$$
\|   x \| _{\ell^{p}_{q } } \ge |S_{m,0}|^{\frac{1}{q}-\frac{1}{p}} |x_m|.
$$
Because $|S_{m,0}|=1$, we obtain
$ \|  x \|_{\ell^{\infty }  }=\sup_{j \in \mathbb{Z} } |x_j| \leq \|  x \|_{\ell^{p}_{q } }$.

Next we shall show that $ \ell^\infty \setminus   \ell^p_q \neq \emptyset$.
Define $(y_j)_{j\in \mathbb{Z}}$ by  $y_j :=1$ for every
$ j \in \mathbb{Z} $. We obtain
$$\| y \|_{\ell^\infty }=\sup_{j \in \mathbb{Z} } |y_j|=1< \infty.$$
On the other hand,
$$
\| y \|_{\ell^p_q } =\sup_{m \in \mathbb{Z}, N \in \mathbb{N}}|S_{m,N}|^{\frac{1}{q}-\frac{1}{p}}
\left( \sum_{j \in S_{m,N}}|y_j |^{p}\right)^{\frac{1}{p}} =
\sup_{N \in \mathbb{N}_0}(2N+1)^{\frac{1}{q}-\frac{1}{p}} (2N+1)^{\frac{1}{p}}
= \sup_{N \in \mathbb{N}_0} (2N+1)^{\frac{1}{q}}
= \infty.$$
Thus, $y\in \ell^\infty \setminus \ell^{p}_{q}$.
\bigskip

(ii) Now, take an arbitrary  $x \in \ell^{p}_{\infty }$. For every $m \in \mathbb{Z}$,
$\| x \|_{\ell^{p}_{\infty } } \ge |S_{m,0}|^{ -\frac{1}{p}} |x_m| $ holds. Since $|S_{m,0}|=1$,
we have $$ \|  x \|  _{\ell^{\infty }  }=\sup_{j \in \mathbb{Z} } |x_j| \leq \|  x \|_{\ell^{p}_{\infty } }.$$
Meanwhile, take an arbitrary $x \in \ell^{\infty} $. Observe that
\begin{align*}
\| x \|_{\ell^{p}_{\infty } } &= \sup_{m \in \mathbb{Z}, N \in \mathbb{N}_0}|S_{m,N}|^{-\frac{1}{p}}
\left( \sum_{j \in S_{m,N}}|x_j |^{p}\right)^{\frac{1}{p}} \\
&\le \sup_{m \in \mathbb{Z}, N \in \mathbb{N}_0}|S_{m,N}|^{ -\frac{1}{p}} |S_{m,N}|^{\frac{1}{p}} \| x \|_{\ell^\infty}\\
&= \| x \|_{\ell^\infty}.
\end{align*}
Therefore, $x\in \ell^{p}_{\infty }$.
Hence,
$\ell^{p}_{\infty } = \ell^{\infty}.$
\end{proof}

Using the relation between $\ell^{p}_{q}$ and $\ell^\infty$ in Theorem \ref{t1b}, we shall
prove the inclusion property of discrete Morrey spaces of the second type.

\bigskip

\begin{thm}\label{t1c}
Let $1\le p_1 \le q_1 <\infty$ and $1\le p_2 \le q_2 <\infty$.
If  $p_1 \le  p_2$ and $\frac{q_1}{q_2} \le  \frac{p_1}{p_2}$, then
$
\  \ell^{p_1}_{q_1 } \subseteq \ell^{p_2}_{q_2 }
$
 with
$$
\|  \cdot \|  _{\ell^{p_2}_{q_2 }  } \leq \|  \cdot \| _{\ell^{p_1}_{q_1 } }.
$$
Moreover, $ \ell^{p_2}_{q_2} \setminus   \ell^{p_1}_{q_1} \neq \emptyset$ whenever
($p_1 <  p_2$  and $\frac{  q_1}{q_2 } \le  \frac{p_1}{p_2 }$) or ($p_1 \le   p_2$
and $\frac{  q_1}{q_2 } <  \frac{p_1}{p_2 }$).
\end{thm}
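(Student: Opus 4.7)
My plan is to bootstrap the norm comparison from Theorem~\ref{t1b}, which supplies $\|x\|_{\ell^\infty}\le \|x\|_{\ell^{p_1}_{q_1}}$. Since $p_1 \le p_2$, for each $j$ I have $|x_j|^{p_2} \le \|x\|_{\ell^\infty}^{p_2-p_1}|x_j|^{p_1}$, so an $\ell^{p_2}$-sum over $S_{m,N}$ reduces to an $\ell^{p_1}$-sum times $\|x\|_{\ell^{p_1}_{q_1}}^{p_2-p_1}$. I then take $p_2$-th roots and apply the definition of $\|x\|_{\ell^{p_1}_{q_1}}$ to dominate $\bigl(\sum_{j\in S_{m,N}}|x_j|^{p_1}\bigr)^{1/p_2}=\bigl(\sum_{j\in S_{m,N}}|x_j|^{p_1}\bigr)^{(1/p_1)\cdot(p_1/p_2)}$ by $|S_{m,N}|^{(1/p_1-1/q_1)(p_1/p_2)}\,\|x\|_{\ell^{p_1}_{q_1}}^{p_1/p_2}=|S_{m,N}|^{1/p_2-p_1/(p_2 q_1)}\,\|x\|_{\ell^{p_1}_{q_1}}^{p_1/p_2}$. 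After multiplying by the Morrey weight $|S_{m,N}|^{1/q_2-1/p_2}$, the exponent of $|S_{m,N}|$ collapses to $1/q_2-p_1/(p_2 q_1)$, which is non-positive exactly because of the hypothesis $q_1/q_2\le p_1/p_2$. Since $|S_{m,N}|\ge 1$, this factor is $\le 1$, and taking the supremum yields $\|x\|_{\ell^{p_2}_{q_2}}\le \|x\|_{\ell^{p_1}_{q_1}}$, which gives both the inclusion and the stated norm inequality.

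\textbf{Properness.} For the \emph{moreover} part I would exhibit a single power-decay example. Set $x_j:=|j|^{-\sigma}$ for $j\ne 0$ and $x_0:=0$, with $\sigma$ to be tuned to the case. A short estimate of $\sum_{1\le|j|\le N}|j|^{-\sigma p}$ shows that when $\sigma p<1$ the relevant contribution to the Morrey norm is of order $N^{1/q-\sigma}$, so $x\in\ell^p_q$ iff $\sigma\ge 1/q$; when $\sigma p>1$, $x\in\ell^p\subseteq\ell^p_q$ automatically; and when $\sigma p=1$ the sum diverges logarithmically, which is fatal exactly when $p=q$. Either hypothesis in the moreover statement forces $q_1/q_2\le p_1/p_2\le 1$ with at least one strict inequality, hence $q_1<q_2$, so the interval $[1/q_2,1/q_1)$ is non-degenerate. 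I then choose $\sigma$ in this interval, avoiding the single point $1/p_2$; this is always possible since removing one point leaves a nonempty interval. With this choice $\sigma\ge 1/q_2$ and $\sigma\ne 1/p_2$ put $x\in\ell^{p_2}_{q_2}$, while $\sigma<1/q_1\le 1/p_1$ makes the $\ell^{p_1}_{q_1}$-norm blow up along centred windows $S_{0,N}$ as $N\to\infty$.

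\textbf{Main obstacle.} The inclusion step is essentially bookkeeping once Theorem~\ref{t1b} is in hand. The delicate point is the properness construction, where I see two things to be careful about: first, verifying that $[1/q_2,1/q_1)\setminus\{1/p_2\}$ is genuinely nonempty in every allowed configuration, including the borderline case $p_1/q_1=p_2/q_2$ with $p_1<p_2$ and the edge case $p_2=q_2$; second, checking that the Morrey norm of $x_j=|j|^{-\sigma}$ is really controlled uniformly over \emph{all} $S_{m,N}$, not merely the centred ones. The latter I would dispatch by monotonicity: since $|x_j|$ is symmetric and non-increasing in $|j|$, any sum $\sum_{j\in S_{m,N}}|x_j|^p$ is dominated (up to a universal constant) by the corresponding sum over a window of the same size touching the origin, so the full supremum reduces to the centred computation already carried out.
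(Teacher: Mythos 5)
Your proof is correct, and the inclusion half is essentially identical to the paper's argument: the same factorization $|x_j|^{p_2}\le \|x\|_{\ell^\infty}^{p_2-p_1}|x_j|^{p_1}$, the same appeal to Theorem~\ref{t1b}, and the same collapse of the $|S_{m,N}|$-exponent to $\frac{1}{q_2}-\frac{p_1}{p_2q_1}\le 0$. For properness the paper also uses a power-decay sequence, but it fixes the exponent at $\sigma=1/q_2$ (taking $y_j=|j|^{-1/q_2}$) and asserts $\sum_{j=1}^{N}j^{-p_k/q_2}\approx N^{1-p_k/q_2}$; your tunable $\sigma\in[1/q_2,1/q_1)\setminus\{1/p_2\}$ is a genuine improvement, because the paper's asymptotic silently fails in the borderline case $p_2=q_2$ (where the sum is $\approx\log N$ and $y\notin\ell^{p_2}=\ell^{p_2}_{q_2}$), a configuration the ``moreover'' hypotheses do permit (it forces $p_1=q_1<p_2=q_2$, i.e.\ the classical $\ell^{p_1}\subsetneq\ell^{p_2}$). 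Your choice of $\sigma$ strictly above $1/p_2$ in that case, and your explicit treatment of the three regimes $\sigma p\lessgtr 1$ and $\sigma p=1$, repair exactly this gap. You are also right that the reduction of the supremum over all windows $S_{m,N}$ to centred ones needs the monotone-rearrangement observation; the paper passes over this step without comment. In short: same strategy, but your version of the properness construction is the more careful one.
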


\begin{proof}
Take an arbitrary $x \in \ell^{p_1}_{q_1 }$. Then for every $m\in \mathbb{Z}$ and $N\in \mathbb{N}_0$, we have
\begin{align*}
|S_{m.N}|^{\frac{1}{q_2}-\frac{1}{p_2}}
\left( \sum_{j\in S_{m,N}}
|x_j|^{p_2}
\right)^{\frac{1}{p_2}}
&\le
(2N+1)^{\frac{1}{q_2}-\frac{1}{p_2}}
\left( \sum_{j\in S_{m,N}}
|x_j|^{p_1}
\right)^{\frac{1}{p_2}}
\|x\|_{\ell^\infty}^{1-\frac{p_1}{p_2}}
\\
&\le
(2N+1)^{\frac{1}{q_2}-\frac{1}{p_2}}
\left( \sum_{j\in S_{m,N}}
|x_j|^{p_1}
\right)^{\frac{1}{p_2}}
\|x\|_{\ell^{p_1}_{q_1}}^{1-\frac{p_1}{p_2}}
\\
&\le
(2N+1)^{\frac{1}{q_2}-\frac{p_1}{q_1p_2}}
\|x\|_{\ell^{p_1}_{q_1}},
\end{align*}
where we use Theorem \ref{t1b} and the definition of $\ell^{p_1}_{q_1}$.
Since $\frac{1}{q_2}-\frac{p_1}{q_1p_2}\le 0$, we get
\[
S_{m.N}|^{\frac{1}{q_2}-\frac{1}{p_2}}
\left( \sum_{j\in S_{m,N}}
|x_j|^{p_2}
\right)^{\frac{1}{p_2}}
\le \|x\|_{\ell^{p_1}_{q_1}},
\]
and hence $x\in \ell^{p_2}_{q_2}$ with
\[
\|x\|_{\ell^{p_2}_{q_2}}\le \|x\|_{\ell^{p_1}_{q_1}}.
\]

We now prove the second part of the theorem. Note that our assumption implies $q_1<q_2$.
Define  $(y_j)_{j\in \mathbb{Z}}$ where $y_0 :=1$ dan $y_j  :=|j|^{-\frac{1}{q_2}}$ for every
$ j \in \mathbb{Z} - \{0\} $.
Then, for every $N \in \mathbb{N}_0$ and for $k=1,2$, we have
\[
\sum_{j\in S_{0,N}}|y_j|^{p_k}=\sum_{j=-N}^{N}|y_j|^{p_k} =   1 + 2 \sum_{j=1}^{N}j^{ -\frac{p_k}{q_2}}
\approx 1+  N ^{1-\frac{p_k}{q_2}}.
\]
Therefore
\begin{align*}
\| y \| _{\ell^{p_1}_{q_1 } } &\ge \sup_{N \in \mathbb{N}_0} (1+2N)^{\frac{1}{q_1}-\frac{1}{p_1}}
\left(\sum_{j \in S_{0,N}} |y_j |^{p_1 } \right)^{\frac{1}{p_1}} \\
&\ge C \sup_{N \in \mathbb{N}_0} (1+2N)^{\frac{1}{q_1}-\frac{1}{p_1}} (1+N^{1-\frac{p_1}{q_2}})^{\frac{1}{p_1}} \\
&\ge C \sup_{N\in \mathbb{N}_0}
N^{\frac{1}{q_1}-\frac{1}{q_2}}
= \infty.
\end{align*}
Meanwhile, we obtain
\begin{align*}
\| y \| _{\ell^{p_2}_{q_2 } } &= \sup_{m \in \mathbb{Z}, N \in \mathbb{N}_0} (1+2N)^{\frac{1}{q_2}-\frac{1}{p_2}}
(\sum_{j \in S_{m,N}} |y_j |^{p_2 } )^{\frac{1}{p_2}} \\
&\le  C \sup_{N \in \mathbb{N}_0} (1+2N)^{\frac{1}{q_2}-\frac{1}{p_2}} (1+2N^{1-\frac{p_2}{q_2}})^{\frac{1}{p_2}} \\
&\le C<\infty.
\end{align*}
Thus, $y\in \ell^{p_2}_{q_2} \setminus \ell^{p_1}_{q_1}$. This completes the proof.
\end{proof}

As a corollary of Theorem \ref{t1c}, we obtain the following result.

\bigskip

\begin{thm}\label{t1a}
If  $1\le p \le q_1<\infty$ and $1\le p \le q_2<\infty$ then
 $q_1\le q_2$  if and only if  $\ell^{p }_{q_1}\subseteq \ell^{p }_{q_2}$ with
 $\|  \cdot \|  _{\ell^{p }_{q_2} } \leq \|  \cdot \| _{\ell^{p}_{q_1} }$.
Furthermore, $\ell^{p }_{q_1}$ is a proper subset of $\ell^{p }_{q_2}$ whenever $1\le p \le q_1<  q_2<\infty$.
\end{thm}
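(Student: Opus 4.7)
The plan is to deduce this theorem directly from Theorem \ref{t1c} by specializing to the diagonal case $p_1 = p_2 = p$. Under this specialization the hypothesis $p_1 \le p_2$ of Theorem \ref{t1c} is automatic, and the ratio condition $q_1/q_2 \le p_1/p_2$ collapses to $q_1 \le q_2$; this reduces each assertion of Theorem \ref{t1a} to an already-established statement.

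For the implication $q_1 \le q_2 \Rightarrow \ell^{p}_{q_1} \subseteq \ell^{p}_{q_2}$ with $\|\cdot\|_{\ell^{p}_{q_2}} \le \|\cdot\|_{\ell^{p}_{q_1}}$, I would feed $p_1 = p_2 = p$ into Theorem \ref{t1c} and read off both the inclusion and the norm bound. The proper-inclusion statement in the last sentence of the theorem follows from the second part of Theorem \ref{t1c}: the clause ``$p_1 \le p_2$ and $q_1/q_2 < p_1/p_2$'' becomes simply ``$q_1 < q_2$'', producing $\ell^{p}_{q_2} \setminus \ell^{p}_{q_1} \neq \emptyset$, which is exactly the desired properness.

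For the converse implication $\ell^{p}_{q_1} \subseteq \ell^{p}_{q_2} \Rightarrow q_1 \le q_2$, I would argue by contrapositive. Suppose $q_2 < q_1$. Invoking Theorem \ref{t1c} with the roles of $q_1$ and $q_2$ exchanged, i.e. applying it to the inclusion $\ell^{p}_{q_2} \subseteq \ell^{p}_{q_1}$, the conditions $p \le p$ and $q_2/q_1 < 1 = p/p$ put us in the strict case of its second part. This furnishes an element in $\ell^{p}_{q_1} \setminus \ell^{p}_{q_2}$ (explicitly, a sequence of the form $y_j := |j|^{-1/q_1}$ for $j\neq 0$ and $y_0 := 1$, as in the proof of Theorem \ref{t1c}), which contradicts $\ell^{p}_{q_1} \subseteq \ell^{p}_{q_2}$.

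No step in this argument poses a real obstacle: the work has already been done inside Theorem \ref{t1c}, and Theorem \ref{t1a} is simply its diagonal specialization. The only point requiring mild care is to match the $(p_1,q_1,p_2,q_2)$-labels of Theorem \ref{t1c} with the correct orientation when applying it in the converse direction, so that the element produced by the ``proper'' part of Theorem \ref{t1c} lies in $\ell^{p}_{q_1} \setminus \ell^{p}_{q_2}$ rather than the reverse.
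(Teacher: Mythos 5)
Your proposal is correct, and it coincides with the paper for everything except the ``only if'' direction: like the paper, you obtain the forward implication and the properness statement by setting $p_1=p_2=p$ in Theorem \ref{t1c}. Where you diverge is the converse. The paper tests the assumed norm inequality on the finitely supported sequence $x_j:=1$ for $|j|\le K$, $x_j:=0$ otherwise, computes $\|x\|_{\ell^p_{q}}=(2K+1)^{1/q}$ on both sides, and reads off $q_1\le q_2$ by comparing exponents; this is completely elementary but genuinely uses the hypothesis $\|\cdot\|_{\ell^{p}_{q_2}}\le\|\cdot\|_{\ell^{p}_{q_1}}$, not just the set inclusion. You instead run Theorem \ref{t1c} in the reversed orientation ($q_1^{\mathrm{new}}=q_2$, $q_2^{\mathrm{new}}=q_1$) and use its properness clause to exhibit $y\in\ell^{p}_{q_1}\setminus\ell^{p}_{q_2}$ with $y_j=|j|^{-1/q_1}$, which contradicts the set inclusion alone. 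Your route therefore proves something slightly stronger --- the mere containment $\ell^{p}_{q_1}\subseteq\ell^{p}_{q_2}$ already forces $q_1\le q_2$, with no norm bound assumed --- at the cost of leaning on the infinitely supported test sequence built inside Theorem \ref{t1c}, whereas the paper's argument is self-contained, quantitative, and uses only a truncated constant sequence. Both are valid; the relabeling you flag as the one delicate point is indeed handled correctly, since with $p_1=p_2=p$ the clause $q_1^{\mathrm{new}}/q_2^{\mathrm{new}}<1$ becomes exactly $q_2<q_1$ and the nonempty difference set comes out as $\ell^{p}_{q_1}\setminus\ell^{p}_{q_2}$, as required.
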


\begin{proof}
In view of Theorem \ref{t1c}, we only need to prove that
the inclusion $\ell^{p }_{q_1}\subseteq \ell^{p }_{q_2}$ with $ \|  \cdot \|  _{\ell^{p }_{q_2} } \leq
\|  \cdot \| _{\ell^{p}_{q_1} }$ imply $q_1\le q_2$.
Let $K\in \mathbb{N}$ and 
take an example $x=(x_j)_{j \in \mathbb{Z}}$ where $x_j:=1$ for $|j|\le K$ and $x_j:=0$ for $|j|>K$.
Observe that
$$
(2 K+1)^{\frac{1}{q_2}}=\|  x \|_{\ell^{p }_{q_2} } \leq \|  x \|_{\ell^{p}_{q_1} } = (2K+1)^{\frac{1}{q_1}}.
$$
Since $2K +1 \ge 1$, we conclude that $q_1 \le q_2$.
\end{proof}

\begin{rem}
The argument in the proof of Theorem \ref{t1a} cannot be applied to `continuous' Morrey spaces
$\mathcal{M}^p_{q_k}(\mathbb{R}^n)$, $k=1, 2$, because the case $|B(a,r)|<1$ occurs in the
formula of the $\mathcal{M}^p_{q_k}$-norm.
\end{rem}

\subsection{Incusion property of weak type discrete Morrey spaces}\label{s22}

Let us recall the definition of weak type discrete Morrey spaces.

\bigskip

\begin{dfn}\label{def1}
Let $1\le p\le q<\infty$. The weak type discrete Morrey spaces $w\ell^p_q$ is defined
to be the set of all sequences $x=(x_j)_{j\in \mathbb{Z}}$ for which the quasi-norm
\begin{equation}\label{eq:14}
\|x\|_{w\ell^p_q}
:=
\sup_{m\in \mathbb{Z}, N\in  \mathbb{N}_0, \gamma>0}
\gamma
|S_{m,N}|^{\frac{1}{q}-\frac{1}{p}}
|\{ j \in S_{m,N}: |x_j|>\gamma  \}|^{\frac{1}{p}}
\end{equation}
is finite.
\end{dfn}

The quasi-norm in Definition \ref{def1} can be rewritten as
\begin{equation}
\|x\|_{w\ell^p_q}
=
\sup_{\gamma>0}
\|1_{\{j\in \mathbb{Z}: |x_j|>\gamma \}}\|_{\ell^p_q},
\end{equation}
where $1_{\{j\in \mathbb{Z}: |x_j|>\gamma \}}:=(1_{\{j\in \mathbb{Z}: |x_j|>\gamma \}}(j))_{j\in \mathbb{Z}}$
and
$$
1_{\{j\in \mathbb{Z}: |x_j|>\gamma \}}(j):=\begin{cases}
1, &|x_j|>\gamma,
\\
0, &|x_j|\le \gamma.
\end{cases}
$$
Note that, for $p=q$, the space $w\ell^p_p$ is the weak type $\ell^p$ space.
It is shown in \cite[Example 3.1]{GKS} that the space $\ell^p$ is a proper subset of $w\ell^p_p$.
More general, the discrete Morrey space $\ell^p_q$ is a subset of $w\ell^p_q$ (see \cite[Theorem 3.2]{GKS}).

For the inclusion between weak type discrete Morrey spaces, it is proven in \cite{GKS} that
$w\ell^{p_2}_q \subseteq w\ell^{p_1}_q$ whenever $1\le p_1\le p_2\le q<\infty$.
Now we show that $p_1\le p_2$ is a necessary condition for these inclusions.

\bigskip

\begin{thm}\label{t2}
	Let $1\le p_1\le q<\infty$ and $1\le p_2\le q<\infty$.
	If $w\ell^{p_2}_q\subseteq w\ell^{p_1}_q$, then $p_1\le p_2$.
\end{thm}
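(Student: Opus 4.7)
The plan is to mirror the argument from Theorem \ref{t1}, taking advantage of the fact that the counterexample constructed there is already a $\{0,1\}$-valued sequence. Specifically, I would assume for contradiction that $1 \le p_2 < p_1 \le q$ and reuse the sequence $x = (x_j)_{j \in \mathbb{Z}}$ defined in \eqref{eq:2}, together with the two conclusions already extracted in the proof of Theorem \ref{t1}: namely, $x \in \ell^{p_2}_q$ and $\|x\|_{\ell^{p_1}_q} = \infty$.

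The first step is to show $x \in w\ell^{p_2}_q$. This is immediate from the inclusion $\ell^{p_2}_q \subseteq w\ell^{p_2}_q$ (recorded in \cite[Theorem 3.2]{GKS} and mentioned in the excerpt just before the statement).

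The second step is to show $x \notin w\ell^{p_1}_q$, which then contradicts the hypothesised inclusion. Since $x_j \in \{0,1\}$ for every $j$, for any $\gamma \in (0,1)$ we have
\[
|\{ j \in S_{m,N} : |x_j| > \gamma \}| = |\{ j \in S_{m,N} : x_j = 1 \}| = \sum_{j \in S_{m,N}} |x_j|^{p_1}.
\]
Plugging $\gamma = 1/2$ into the definition \eqref{eq:14} of the weak quasi-norm then yields
\[
\|x\|_{w\ell^{p_1}_q} \ge \tfrac{1}{2}\, |S_{m,N}|^{\frac{1}{q}-\frac{1}{p_1}} \left( \sum_{j \in S_{m,N}} |x_j|^{p_1} \right)^{1/p_1}
\]
for every $m \in \mathbb{Z}$ and $N \in \mathbb{N}_0$. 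Taking the supremum over $m, N$ gives $\|x\|_{w\ell^{p_1}_q} \ge \tfrac{1}{2}\|x\|_{\ell^{p_1}_q}$, which is infinite by the lower bound already exhibited along the subsequence $m = 0$, $N = 2^{n(v+w)}$ in \eqref{eq:140918-2}.

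There is essentially no new obstacle in this strategy: the delicate multi-scale construction of $x$ and all of its analytic estimates have already been carried out in Theorem \ref{t1}, and the passage from the weak quasi-norm back to the strong norm is trivial for a binary sequence because testing at the single threshold $\gamma = 1/2$ recovers the entire $\ell^p_q$-norm up to the factor $\tfrac{1}{2}$. The one conceptual point worth highlighting is that no new counterexample need be invented: the binary nature of \eqref{eq:2} lets the same sequence serve for both Theorem \ref{t1} and Theorem \ref{t2}.
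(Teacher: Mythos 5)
Your proposal is correct and follows essentially the same route as the paper: the authors also reuse the binary sequence from \eqref{eq:2}, deduce $x\in w\ell^{p_2}_q$ from $x\in\ell^{p_2}_q$ and the inclusion $\ell^{p_2}_q\subseteq w\ell^{p_2}_q$, and then test the weak quasi-norm at $\gamma=\tfrac12$ to obtain $\|x\|_{w\ell^{p_1}_q}\ge\tfrac12\|x\|_{\ell^{p_1}_q}=\infty$. No substantive difference.
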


\begin{proof}
Assume to the contrary that $p_1>p_2$.
Let $x=(x_j)_{j\in \mathbb{Z}}$ be defined by \eqref{eq:2}.
Since $x\in \ell^{p_2}_q $ and $ \ell^{p_2}_q \subseteq w\ell^{p_2}_q$, we have
$x\in  w\ell^{p_2}_q$.
If we can prove that $x\notin w\ell^{p_1}_q$, then we obtain a contradiction, so we
must have $p_1 \le p_2$.
Now we show that $x\notin w\ell^{p_1}_q$.
Observe that
\[
\|x\|_{w\ell^{p_1}_q}
\ge
\frac{1}{2}
\sup_{m\in \mathbb{Z}, N\in \mathbb{N}_0}
|S_{m,N}|^{\frac{1}{q}-\frac{1}{p_1}}
\left|\left\{
j\in S_{m,N}:
|x_j|>\frac{1}{2}
  \right\}\right|^{\frac{1}{p_1}}.
\]
Since either $x_j=0$ or $x_j=1$, we have
\begin{align*}
\sup_{m\in \mathbb{Z}, N\in \mathbb{N}_0}
|S_{m,N}|^{\frac{1}{q}-\frac{1}{p_1}}
\left|\left\{
j\in S_{m,N} : |x_j|>\frac{1}{2}
\right\}\right|^{\frac{1}{p_1}}
&=
\sup_{m\in \mathbb{Z}, N\in \mathbb{N}_0}
|S_{m,N}|^{\frac{1}{q}-\frac{1}{p_1}}
\left(
\sum_{j\in S_{m,N}:
	|x_j|>\frac{1}{2}}
|x_j|^{p_1}
\right)^{\frac{1}{p_1}}
\\
&=
\sup_{m\in \mathbb{Z}, N\in \mathbb{N}_0}
|S_{m,N}|^{\frac{1}{q}-\frac{1}{p_1}}
\left(
\sum_{j\in S_{m,N}}
|x_j|^{p_1}
\right)^{\frac{1}{p_1}}
\\
&=
\|x\|_{\ell^{p_1}_q}.
\end{align*}
Consequently,
\[
\|x\|_{w\ell^{p_1}_q}
\ge
\frac12
\|x\|_{\ell^{p_1}_q}
=\infty,
\]
as desired.
\end{proof}

Similar with the (strong type) discrete Morrey spaces, we also show the inclusion property of weak
type discrete Morrey spaces of the second kind.

\bigskip

\begin{thm}\label{t2b}
Let $1\le p_1 \le q_1 <\infty$ and $1\le p_2 \le q_2 <\infty$. If
$p_1 \le  p_2$ and $\frac{  q_1}{q_2 } \le  \frac{p_1}{p_2 }$, then
$
\  w\ell^{p_1}_{q_1 } \subseteq w\ell^{p_2}_{q_2 }
$
with
$$
\|  \cdot \|  _{w\ell^{p_2}_{q_2 }  } \leq \|  \cdot \| _{w\ell^{p_1}_{q_1 } }.
$$
Moreover, $ w \ell^{p_2}_{q_2} \setminus  w \ell^{p_1}_{q_1} \neq \emptyset$ whenever
($p_1 <  p_2$  and $\frac{  q_1}{q_2 } \le  \frac{p_1}{p_2 }$) or ($p_1 \le  p_2$  
and $\frac{  q_1}{q_2 } <  \frac{p_1}{p_2 }$).
\end{thm}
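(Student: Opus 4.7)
The plan is to mirror the proof of Theorem \ref{t1c}, but work at the level of the distribution function $|\{j\in S_{m,N}:|x_j|>\gamma\}|$ instead of the $\ell^p$-sum. The only external ingredient I need is an analogue of the embedding $\ell^{p}_{q}\subseteq\ell^\infty$ used there, namely $\|x\|_{\ell^\infty}\le \|x\|_{w\ell^{p_1}_{q_1}}$. This follows by testing \eqref{eq:14} with $N=0$: for any $m\in\mathbb{Z}$ and $\gamma\in(0,|x_m|)$ the set $\{j\in S_{m,0}:|x_j|>\gamma\}$ is exactly $\{m\}$, so its size is $1$, and $|S_{m,0}|=1$; thus $\gamma\le\|x\|_{w\ell^{p_1}_{q_1}}$ and letting $\gamma\to|x_m|^-$ gives the claim.

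For the main inclusion, I would fix $m\in\mathbb{Z}$, $N\in\mathbb{N}_0$, and $\gamma>0$, and set $E:=\{j\in S_{m,N}:|x_j|>\gamma\}$. If $\gamma>\|x\|_{\ell^\infty}$ then $E=\emptyset$ and the corresponding term in \eqref{eq:14} vanishes, so I may assume $\gamma\le\|x\|_{\ell^\infty}\le\|x\|_{w\ell^{p_1}_{q_1}}$. The key algebraic identity is
\[
\gamma\,|S_{m,N}|^{\frac{1}{q_2}-\frac{1}{p_2}}|E|^{\frac{1}{p_2}}
=\Bigl(\gamma\,|S_{m,N}|^{\frac{1}{q_1}-\frac{1}{p_1}}|E|^{\frac{1}{p_1}}\Bigr)^{\frac{p_1}{p_2}}\cdot\gamma^{1-\frac{p_1}{p_2}}\cdot|S_{m,N}|^{\frac{1}{q_2}-\frac{p_1}{q_1p_2}}.
\]
The first factor is at most $\|x\|_{w\ell^{p_1}_{q_1}}^{p_1/p_2}$ by definition, the second is at most $\|x\|_{w\ell^{p_1}_{q_1}}^{1-p_1/p_2}$ by the $\ell^\infty$ bound above, and the hypothesis $q_1/q_2\le p_1/p_2$ gives $\tfrac{1}{q_2}-\tfrac{p_1}{q_1p_2}\le 0$, so the third factor is at most $1$ since $|S_{m,N}|\ge 1$. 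Multiplying yields the pointwise bound $\|x\|_{w\ell^{p_1}_{q_1}}$, and taking the supremum over $m,N,\gamma$ gives $\|x\|_{w\ell^{p_2}_{q_2}}\le\|x\|_{w\ell^{p_1}_{q_1}}$.

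For the strict inclusion, I would reuse the witness from the proof of Theorem \ref{t1c}: $y_0:=1$ and $y_j:=|j|^{-1/q_2}$ for $j\neq 0$. That proof already shows $y\in\ell^{p_2}_{q_2}\subseteq w\ell^{p_2}_{q_2}$, so the task is to verify $y\notin w\ell^{p_1}_{q_1}$. Note that under either of the stated conditions one has $q_1<q_2$. For small $\gamma>0$ the set $\{j:|y_j|>\gamma\}=\{j\in\mathbb{Z}:|j|<\gamma^{-q_2}\}$ has cardinality $\sim\gamma^{-q_2}$, so choosing $m=0$ and $N:=\lceil\gamma^{-q_2}\rceil$ gives
\[
\gamma\,|S_{0,N}|^{\frac{1}{q_1}-\frac{1}{p_1}}|\{j\in S_{0,N}:|y_j|>\gamma\}|^{\frac{1}{p_1}}
\gtrsim \gamma\cdot\gamma^{-q_2(\frac{1}{q_1}-\frac{1}{p_1})}\cdot\gamma^{-\frac{q_2}{p_1}}
=\gamma^{1-q_2/q_1},
\]
which tends to $\infty$ as $\gamma\to 0^+$. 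Hence $\|y\|_{w\ell^{p_1}_{q_1}}=\infty$.

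The main obstacle is the bookkeeping of exponents in the three-factor decomposition above: one has to choose the split so that the cube of $w\ell^{p_1}_{q_1}$-data is isolated and the residual $|S_{m,N}|$-exponent is non-positive under exactly the assumed relation $q_1/q_2\le p_1/p_2$. Once that identity is in place, every step is automatic, and the minor separate case $\gamma>\|x\|_{\ell^\infty}$ is harmless.
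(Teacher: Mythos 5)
Your proof is correct, and it differs from the paper's mainly in how the norm inequality is obtained. The paper proves $\|x\|_{w\ell^{p_2}_{q_2}}\le\|x\|_{w\ell^{p_1}_{q_1}}$ in two lines: it rewrites $\|x\|_{w\ell^{p}_{q}}=\sup_{\gamma>0}\gamma\|1_{\{j:|x_j|>\gamma\}}\|_{\ell^{p}_{q}}$ and invokes Theorem \ref{t1c} on the indicator sequences, so all the exponent bookkeeping is inherited from the strong-type result. You instead redo that bookkeeping directly on the distribution function, which forces you to first prove the weak-type analogue of Theorem \ref{t1b}(i), namely $\|x\|_{\ell^\infty}\le\|x\|_{w\ell^{p_1}_{q_1}}$, and to dispose of the case $\gamma>\|x\|_{\ell^\infty}$ separately. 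Your three-factor identity checks out (the $\gamma$-, $|E|$-, and $|S_{m,N}|$-exponents all match), and the hypothesis $q_1/q_2\le p_1/p_2$ enters exactly where it should, via $\tfrac1{q_2}-\tfrac{p_1}{q_1p_2}\le0$ and $|S_{m,N}|\ge1$; note that applying Theorem \ref{t1c} to $1_E$ amounts to your computation with $\|1_E\|_{\ell^\infty}\le1$ playing the role of your bound $\gamma\le\|x\|_{w\ell^{p_1}_{q_1}}$, so your route is a self-contained unfolding of the paper's reduction, at the cost of an extra lemma. For properness you use the paper's witness $y$, and your estimate with $N=\lceil\gamma^{-q_2}\rceil$ as $\gamma\to0^+$ is the same computation the paper performs by choosing $\gamma=N^{-1/q_2}/2$ for each $N$; both give $\|y\|_{w\ell^{p_1}_{q_1}}\gtrsim\sup_{N}N^{1/q_1-1/q_2}=\infty$, using that either hypothesis forces $q_1<q_2$.
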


\begin{proof}
Let $x\in w\ell^{p_1}_{q_1}$. According to Theorem \ref{t1c}, we have
\begin{align*}
\|x\|_{w\ell^{p_2}_{q_2}}
=\sup_{\gamma>0}
\gamma \|1_{\{j\in \mathbb{Z}: |x_j|>\gamma \}}\|_{\ell^{p_2}_{q_2}}
\le
\sup_{\gamma>0}
\gamma \|1_{\{j\in \mathbb{Z}: |x_j|>\gamma \}}\|_{\ell^{p_1}_{q_1}}
=\|x\|_{\ell^{p_1}_{q_1}}<\infty,
\end{align*}
which tells us that $x\in w\ell^{p_2}_{q_2}$.
Hence, $w\ell^{p_1}_{q_1}\subseteq w\ell^{p_2}_{q_2}$ with
$\|\cdot\|_{w\ell^{p_2}_{q_2}} \le  \|\cdot\|_{w\ell^{p_1}_{q_1}}$.

Now we prove the second part of this theorem.
By our assumption, we have $q_1<q_2$.
Let $y=(y_j)_{j\in \mathbb{Z}}$ be defined as in the proof of Theorem \ref{t1c}. 
Note that $y\in \ell^{p_2}_{q_2}$. Therefore,
$$ 
\| y \| _{w\ell^{p_2}_{q_2 } }\le \| y \| _{\ell^{p_2}_{q_2 } }<\infty.
$$
Hence, $y\in w\ell^{p_2}_{q_2}$.
On the other hand,
for every $N\in \mathbb{N}$, we have
\begin{align*}
\frac{N^{-\frac{1}{q_2}}}{2}
|S_{0,N}|^{\frac{1}{q_1}-\frac{1}{p_1}}
\left|
\left\{j\in S_{0,N}: |y_j|> \frac{N^{-\frac{1}{q_2}}}{2}  \right\}
\right|^{\frac{1}{p_1}}
= \frac{N^{-\frac{1}{q_2}}}{2} (2N+1)^{\frac{1}{q_1}-\frac{1}{p_1}}
(2N+1)^{\frac{1}{p_1}}
=C N^{\frac{1}{q_1}-\frac{1}{q_2}}.
\end{align*}
Consequently,
\begin{align*}
\| y \| _{w\ell^{p_1}_{q_1 } } \ge & C \sup_{N \in \mathbb{N}}
N^{\frac{1}{q_1}-\frac{1}{q_2}}
= \infty.
\end{align*}
Hence, $y\notin w\ell^{p_1}_{q_1}$. Thus, $ w \ell^{p_2}_{q_2} \setminus  w \ell^{p_1}_{q_1} \neq \emptyset$.
\end{proof}

Necessary and sufficient conditions for the inclusion property of weak type discrete Morrey spaces
of the second kind are presented in the following theorem.

\bigskip

\begin{thm}\label{t2c}
Let $1\le p \le q_1<\infty$ and $1\le p \le q_2<\infty$. Then
$q_1\le q_2$  if and only if  $w\ell^{p }_{q_1}\subseteq w\ell^{p }_{q_2}$ with
$\|  \cdot \|  _{w\ell^{p }_{q_2} } \leq \|  \cdot \| _{w\ell^{p}_{q_1} }$.
Furthermore, $w\ell^{p }_{q_1}$ is a proper subset of $w\ell^{p }_{q_2}$ whenever $ q_1<  q_2$.
\end{thm}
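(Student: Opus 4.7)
The proposal is to mirror the structure of Theorem \ref{t1a}: the inclusion-with-norm-inequality part will come from Theorem \ref{t2b} applied with $p_1=p_2=p$, the reverse implication will use the characteristic function of $S_{0,K}$ as a test example, and the properness will again come directly from the second part of Theorem \ref{t2b}.

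For the forward direction, suppose $q_1 \le q_2$. Setting $p_1=p_2=p$ in Theorem \ref{t2b}, the hypothesis $\frac{q_1}{q_2}\le \frac{p_1}{p_2}=1$ becomes exactly $q_1\le q_2$, so Theorem \ref{t2b} yields $w\ell^{p}_{q_1}\subseteq w\ell^{p}_{q_2}$ together with the norm inequality $\|\cdot\|_{w\ell^{p}_{q_2}}\le \|\cdot\|_{w\ell^{p}_{q_1}}$.

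For the reverse direction, I would let $K\in\mathbb{N}$ and take the test sequence $x_j:=1$ for $|j|\le K$ and $x_j:=0$ otherwise, exactly as in Theorem \ref{t1a}. Because $x$ is $\{0,1\}$-valued, the level sets $\{j\in\mathbb{Z}:|x_j|>\gamma\}$ equal $S_{0,K}$ for $0<\gamma<1$ and are empty for $\gamma\ge 1$, so using the reformulation
\[
\|x\|_{w\ell^{p}_{q}} = \sup_{\gamma>0}\gamma\,\|1_{\{j:|x_j|>\gamma\}}\|_{\ell^{p}_{q}}
\]
(as in the opening of Subsection \ref{s22}), one sees that $\|x\|_{w\ell^{p}_{q}} = \|\chi_{S_{0,K}}\|_{\ell^{p}_{q}} = \|x\|_{\ell^{p}_{q}} = (2K+1)^{1/q}$ for any admissible $q$; the last identity is the computation already carried out in the proof of Theorem \ref{t1a}. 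Plugging this into $\|x\|_{w\ell^{p}_{q_2}}\le \|x\|_{w\ell^{p}_{q_1}}$ gives $(2K+1)^{1/q_2}\le (2K+1)^{1/q_1}$ for every $K\in\mathbb{N}$, which forces $q_1\le q_2$.

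For the strictness part, assume $q_1<q_2$. Applying the second half of Theorem \ref{t2b} with $p_1=p_2=p$ and noting that then $\frac{q_1}{q_2}<1=\frac{p_1}{p_2}$, we obtain an element of $w\ell^{p}_{q_2}\setminus w\ell^{p}_{q_1}$ (concretely, the sequence $y$ from the proof of Theorem \ref{t1c}), which shows that the inclusion is proper.

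The only potentially subtle point is the identification $\|\chi_{S_{0,K}}\|_{\ell^{p}_{q}}=(2K+1)^{1/q}$, which requires verifying that the supremum in the $\ell^{p}_{q}$-norm is attained with $m=0$, $N=K$; partial-overlap cases must also be checked, but each such case is dominated either by the $S_{m,N}\subseteq S_{0,K}$ case (via $|S_{m,N}\cap S_{0,K}|\le |S_{m,N}|$) or by the $S_{0,K}\subseteq S_{m,N}$ case (via $1/q-1/p\le 0$), both of which give the same value $(2K+1)^{1/q}$. This is a short and essentially routine check, so no serious obstacle is expected.
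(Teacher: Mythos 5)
Your proposal is correct and follows essentially the same route as the paper: the forward implication and the properness both come from Theorem \ref{t2b} with $p_1=p_2=p$, and the converse uses the truncated constant sequence supported on $S_{0,K}$ with the identity $\|x\|_{w\ell^p_q}=\|x\|_{\ell^p_q}=(2K+1)^{1/q}$. Your extra verification that the supremum defining $\|\chi_{S_{0,K}}\|_{\ell^p_q}$ is attained at $m=0$, $N=K$ is a detail the paper leaves implicit, and it checks out.
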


\begin{proof}
If $q_1\le q_2$, then by taking $p_1=p_2=p$ in Theorem \ref{t2b}, we get
$w\ell^{p }_{q_1}\subseteq w\ell^{p }_{q_2}$ with $ \|  \cdot \|  _{w\ell^{p }_{q_2} } 
\leq \|  \cdot \| _{w\ell^{p}_{q_1} }$.
Conversely, assume that $w\ell^{p }_{q_1}\subseteq w\ell^{p }_{q_2}$ with
$\|  \cdot \|  _{w\ell^{p }_{q_2} } \leq \|  \cdot \| _{w\ell^{p}_{q_1} }$.
Let $x=(x_j)_{j\in \mathbb{Z}}$ be defined  by
$$
x_j:=\begin{cases}
1, &|x_j|\le k,
\\
0, &|x_j|>k.
\end{cases}
$$
Then
$$
(2 K+1)^{\frac{1}{q_2}}
=\|x\|_{\ell^{p}_{q_2}}
=\|  x \|  _{w\ell^{p }_{q_2} }
\le \|  x \| _{w\ell^{p}_{q_1} }
=\|  x \| _{\ell^{p}_{q_1} }
=(2 K+1)^{\frac{1}{q_1}}.
$$
Therefore, $(2K+1)^{\frac{1}{q_2}-\frac{1}{q_1}}\le 1$.
Since $2K +1 \ge 1$, we have $\displaystyle \frac{1}{q_2}-\frac{1}{q_1}\le 0$, so that $q_1 \le q_2$.
The second part of this theorem follows from Theorem \ref{t2b} by taking
$p_1=p_2=p$.
\end{proof}

\subsection{Inclusion relation between discrete Morrey spaces and weak type discrete Morrey spaces}\label{s23}

\begin{thm}\label{t8}
Let $1\le p_1<p_2\le q<\infty$. Then the inclusion $w\ell^{p_2}_{q}\subseteq \ell^{p_1}_{q}$ is proper.
\end{thm}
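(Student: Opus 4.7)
The plan is to prove the statement in two stages: first verify the inclusion $w\ell^{p_2}_q\subseteq\ell^{p_1}_q$, and then exhibit an explicit element of $\ell^{p_1}_q\setminus w\ell^{p_2}_q$ to witness that the inclusion is strict.

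For the inclusion I would use a discrete Kolmogorov-type argument. Given $x\in w\ell^{p_2}_q$ with $M:=\|x\|_{w\ell^{p_2}_q}$ and any $m\in\mathbb{Z}$, $N\in\mathbb{N}_0$, the layer-cake identity
$$
\sum_{j\in S_{m,N}}|x_j|^{p_1}\;=\;p_1\int_0^\infty \gamma^{p_1-1}\,\bigl|\{j\in S_{m,N}:|x_j|>\gamma\}\bigr|\,d\gamma,
$$
together with the two bounds $|\{j\in S_{m,N}:|x_j|>\gamma\}|\le|S_{m,N}|$ and (obtained by unwinding the definition of $\|\cdot\|_{w\ell^{p_2}_q}$) $|\{j\in S_{m,N}:|x_j|>\gamma\}|\le M^{p_2}\gamma^{-p_2}|S_{m,N}|^{1-p_2/q}$, should be split at the crossover $\gamma_0:=M|S_{m,N}|^{-1/q}$. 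Using $p_1<p_2$, this yields $\sum_{j\in S_{m,N}}|x_j|^{p_1}\le\tfrac{p_2}{p_2-p_1}M^{p_1}|S_{m,N}|^{1-p_1/q}$; dividing by $|S_{m,N}|^{1-p_1/q}$, taking $p_1$-th roots, and supremizing over $(m,N)$ then gives $\|x\|_{\ell^{p_1}_q}\lesssim\|x\|_{w\ell^{p_2}_q}$.

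For the proper inclusion I would set $r:=q/(q-p_1)>1$, $A:=\{\pm\lceil k^r\rceil:k\in\mathbb{N}\}\subset\mathbb{Z}$, and take $x:=\mathbf{1}_A$. Two density estimates will drive the calculation: (i) $|A\cap[-N,N]|\asymp N^{1/r}=N^{1-p_1/q}$ as $N\to\infty$, and (ii) for $|m|>N>0$ the mean value theorem applied to $k\mapsto k^r$ gives $|A\cap S_{m,N}|\lesssim N\,|m|^{1/r-1}$. Substituting (i) into the $\ell^{p_2}_q$-quantity at $m=0$ produces $|S_{0,N}|^{1/q-1/p_2}|A\cap S_{0,N}|^{1/p_2}\asymp N^{(p_2-p_1)/(qp_2)}\to\infty$ since $p_1<p_2$, so $x\notin\ell^{p_2}_q$. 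Because $x$ is $\{0,1\}$-valued, the argument in the proof of Theorem~\ref{t2} gives $\|x\|_{w\ell^{p_2}_q}\ge\tfrac{1}{2}\|x\|_{\ell^{p_2}_q}=\infty$, so $x\notin w\ell^{p_2}_q$.

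The main obstacle will be the verification that $x\in\ell^{p_1}_q$ uniformly over $(m,N)$. For $|m|\le N$, estimate (i) applied to $[-(|m|+N),|m|+N]$ bounds $|S_{m,N}|^{1/q-1/p_1}|A\cap S_{m,N}|^{1/p_1}\lesssim N^{1/q-1/p_1+1/(rp_1)}=N^0$. For $|m|>N$, estimate (ii) combined with the identity $(1/r-1)/p_1=-1/q$ reduces the same quantity to $\lesssim(N/|m|)^{1/q}\le 1$. The delicate point is that because $r$ has been tuned to the boundary exponent for $\ell^{p_1}_q$, every estimate is $O(1)$ rather than decaying in $N$; so the exponent arithmetic must be carried out exactly rather than absorbed into slack polynomial decay.
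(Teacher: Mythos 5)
Your proposal is correct, and it splits naturally into a part that coincides with the paper and a part that is genuinely different. The inclusion $w\ell^{p_2}_q\subseteq\ell^{p_1}_q$ is proved in the paper by exactly your Kolmogorov argument: the layer-cake identity, the trivial bound $|\{j\in S_{m,N}:|x_j|>\gamma\}|\le|S_{m,N}|$ versus the weak-norm bound, and an optimal split at $R\sim\|x\|_{w\ell^{p_2}_q}|S_{m,N}|^{-1/q}$; even your constant $\tfrac{p_2}{p_2-p_1}$ matches. For properness, however, the paper does not build a new example: it recycles the two-parameter lacunary sequence \eqref{eq:20} from Theorem \ref{t1} (with the roles of $p_1$ and $p_2$ interchanged in the choice of $v,w$) to get an element of $\ell^{p_1}_q\setminus\ell^{p_2}_q$, and then invokes the computation of Theorem \ref{t2} to upgrade $x\notin\ell^{p_2}_q$ to $x\notin w\ell^{p_2}_q$ via $\|x\|_{w\ell^{p_2}_q}\ge\tfrac12\|x\|_{\ell^{p_2}_q}$ for $\{0,1\}$-valued sequences. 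Your indicator set $A=\{\pm\lceil k^r\rceil\}$ with $r=q/(q-p_1)$ achieves the same end (you use the identical $\{0,1\}$-valued trick at the last step) but is tuned directly to the critical density $|A\cap[-N,N]|\asymp N^{1-p_1/q}$, which makes the divergence of the $\ell^{p_2}_q$-quantity and the boundedness of the $\ell^{p_1}_q$-quantity transparent from exponent arithmetic alone; the paper's example instead requires the auxiliary integer parameters $v,w$ satisfying the open inequality \eqref{eq:140918-10}. One small repair to your write-up: as stated, estimate (ii) $|A\cap S_{m,N}|\lesssim N|m|^{1/r-1}$ fails when the right-hand side is below $1$ but the intersection is nonempty (e.g.\ $N=1$, $m=\lceil k^r\rceil$), and the mean value theorem needs $|m|\gtrsim 2N$ to keep the evaluation point comparable to $|m|$; you should replace (ii) by $|A\cap S_{m,N}|\lesssim 1+N|m|^{1/r-1}$ for, say, $|m|>2N$, handle $N<|m|\le 2N$ by estimate (i) on $[-3N,3N]$, and note that the extra $1$ is harmless since $|S_{m,N}|^{1/q-1/p_1}\le1$. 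With that patch the argument closes.
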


\begin{proof}
Let $x\in w\ell^{p_2}_{q}$.
We shall show that there exists $C>0$ such that
\begin{align}\label{eq:1092018-1}
|S_{m,N}|^{\frac{1}{q}-\frac{1}{p_1}}
\left( \sum_{j\in S_{m,N}}
|x_j|^{p_1}\right)^{\frac{1}{p_1}}
\le C \|x\|_{w\ell^{p_2}_q}
\end{align}
for every $m\in \mathbb{Z}$ and $N\in \mathbb{N}_0$. Observe that
\[
\sum_{j\in S_{m,N}} |x_j|^{p_1}
=p_1 \int_{0}^{\infty} t^{p_1-1} |\{j\in S_{m,N}: |x_j|>t  \}| \ dt
=I_1+I_2,
\]
where
\[
I_1(R):=p_1 \int_{0}^{R} t^{p_1-1} |\{j\in S_{m,N}: |x_j|>t  \}| \ dt, \
I_2(R):=p_1 \int_{R}^{\infty} t^{p_1-1} |\{j\in S_{m,N}: |x_j|>t  \}| \ dt, \
\]
and $R>0$ is chosen later. The estimate for $I_1(R)$ is
\begin{equation}\label{eq:100918-0}
I_1(R)\le p_1\int_{0}^{R} t^{p_1-1} |S_{m,N}| \ dt
=p_1 (2N+1) \int_{0}^{R} t^{p_1-1} \ dt
=(2N+1)R^{p_1}.
\end{equation}
Meanwhile, by using Definition \ref{def1} and $p_1<p_2$, we have
\begin{align}\label{eq:100918-1}
I_2(R)
&\le
p_1 \int_{R}^{\infty}
t^{p_1-1} |S_{m,N}|^{1-\frac{p_2}{q}} t^{-p_2} \|x\|_{w\ell^{p_2}_{q}}^{p_2} \ dt
\nonumber
\\
&=
p_1 (2N+1)^{1-\frac{p_2}{q}}
\|x\|_{w\ell^{p_2}_q}^{p_2}
\int_{R}^{\infty} t^{p_1-p_2-1} \ dt
\nonumber
\\
&=
\frac{p_1}{p_2-p_1}
(2N+1)^{1-\frac{p_2}{q}}
\|x\|_{w\ell^{p_2}_q}^{p_2} R^{p_1-p_2}.
\end{align}
Combining \eqref{eq:100918-0} and \eqref{eq:100918-1} and taking $\displaystyle R :=
\left( \frac{p_1}{p_2-p_1} \right)^{\frac{1}{p_2}}
\frac{\|x\|_{w\ell^{p_2}_q}}{(2N+1)^{\frac{1}{q}}}$, we get
\begin{align}\label{eq:091018-1}
\sum_{j\in S_{m,N}}
|x_j|^{p_1}
\le C (2N+1)^{1-\frac{p_1}{q}}
\|x\|_{w\ell^{p_2}_q}^{p_1}.
\end{align}
Hence, \eqref{eq:1092018-1} follows
by taking $p_1$-th root of \eqref{eq:091018-1} and multiplying by $|S_{m,N}|^{\frac{1}{q}-\frac{1}{p_1}}$.
Since \eqref{eq:1092018-1} holds for arbitrary $m\in \mathbb{Z}$ and
$N\in \mathbb{N}_0$, we have $x\in \ell^{p_1}_q$ with
\[
\|x\|_{\ell^{p_1}_q} \le
C\|x\|_{w\ell^{p_2}_q},
\]
as desired.

We shall now prove that the inclusion is proper. Choose $v, w \in \mathbb{N}$ such that
\begin{align}\label{eq:140918-10}
\left( \frac{q}{p_2}-1 \right)w +\frac{2q}{p_2}<v< \left( \frac{q}{p_1}-1 \right)w +2.
\end{align}
Let $k_0$  be the smallest positive integer such that $\displaystyle 1- \frac{1}{2^{2k_0}}> \frac{1}{2^{ v+w-1 }}$.
Define
$x=(x_j)_{j \in \mathbb{Z}}$ by the formula
\begin{align}\label{eq:20}
x_j:=
\begin{cases}
1, &\quad |j|=0, 1, 2, \ldots, 2^{v+w}
\\
1, &\quad |j|=2^{k(v+w)}, 2^{k(v+w)}- 2^{kw}, 2^{k(v+w)}-2 (2^{kw}), 2^{k(v+w)}-3(2^{kw}),\\
&\quad \,\,\,\,\,\, \,\,\,\,\,\, \,\,\,\,\,\,\,\,  \ldots,  2^{k(v+w)}- (2^{k(v-2)})(2^{kw}), \ {\rm where }\
k \in \mathbb{N}\cap [k_0, \infty)
\\
0, &\quad  {\rm otherwise}.
\end{cases}
\end{align}
By the same calculation as in the proof of Theorem \ref{t1},
we obtain  $x\in \ell^{p_1}_q$ but $x\notin \ell^{p_2}_q$.
Moreover, repeating the calculation in the proof of Theorem \ref{t2}, we obtain $\|x\|_{w\ell^{p_2}_q}\ge
\frac{1}{2} \|x\|_{\ell^{p_2}_q}$, so that
$x\notin w\ell^{p_2}_q$. This shows that $x\in \ell^{p_1}_q\setminus w\ell^{p_2}_q $. 
Thus, we conclude that the inclusion $w\ell^{p_2}_{q}\subseteq \ell^{p_1}_{q}$ is proper.
\end{proof}

\section{Relation between discrete Morrey spaces and their continuous counterpart}\label{s3}

In this section, we discuss the relation between the inclusion property of discrete Morrey spaces and
that of Morrey spaces. In particular, we reprove the inclusion property of discrete Morrey spaces by
using the inclusion property of Morrey spaces, and we recover some necessary conditions for the inclusion
property of Morrey spaces. We also give a necessary condition for the proper inclusion relation between
discrete Morrey spaces and weak type discrete Morrey spaces. Our results are based on some recent results
in \cite{KS}.

We now recall some definitions and notation. Let $1\le p\le q<\infty$. The Morrey space 
$\mathcal{M}^p_q(\mathbb{R})$
is defined to be the set of all functions $f\in L^p_{\rm loc}(\mathbb{R})$ for which
\[
\|f\|_{\mathcal{M}^p_q}:=
\sup_{a\in \mathbb{R}, r>0}
(2r)^{\frac{1}{q}-\frac{1}{p}}
\left(
\int_{a-r}^{a+r}
|f(t)|^p \ dt
\right)^{\frac{1}{p}}
\]
is finite. The weak Morrey space $w\mathcal{M}^p_q(\mathbb{R})$ is defined to be the set of all measurable
functions $f$ on $\mathbb{R}$ such that the quasi-norm
\[
\|f\|_{w\mathcal{M}^p_q}
:=\sup_{\lambda>0}\|\lambda \chi_{\{ x \in \mathbb{R}:|f(x)|>\lambda \}}\|_{\mathcal{M}^p_q}
\]
is finite. From these definitions, it is clear that $\mathcal{M}^p_q(\mathbb{R})\subseteq w\mathcal{M}^p_q(\mathbb{R})$.
The relation between $\ell^p_q$ and $\mathcal{M}^p_q(\mathbb{R})$ is given as follows. For every
sequence $x=(x_j)_{j\in \mathbb{Z}}$, define a function $\overline{x}:\mathbb{R}\to [0,\infty)$ by
\[
\overline{x}(t)
:=\left(
\sum_{j\in \mathbb{Z}}
|x_j|^{p}\chi_{[j,j+1)}(t)
\right)^{\frac1p}.
\]
Then, the discrete Morrey space $ \ell^p_q$ can be realized as a closed subspace of $ \mathcal{M}^p_q(\mathbb{R})$
in the following sense.

\bigskip

\begin{thm}\label{ES-1}{\rm \cite[Theorem 2.1]{KS}}
Let $1\le p\le q<\infty$. Then, there exist positive constants $C_1$ and $C_2$ such that
\begin{equation}\label{eq:ES-0}
\|\overline{x}\|_{\mathcal{M}^p_q}
\le
C_1 \|x\|_{\ell^p_q}
\end{equation}
for every $x\in \ell^p_q$
and
\begin{equation}\label{eq:ES-1}
\|y\|_{\ell^p_q}
\le
C_2\|\overline{y}\|_{\mathcal{M}^p_q}
\end{equation}
for every sequence $y=(y_j)_{j\in \mathbb{Z}}$.
\end{thm}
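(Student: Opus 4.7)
Both norms are of the form
\[
\sup_{\text{ball}} |\text{ball}|^{\frac{1}{q}-\frac{1}{p}}\bigl(L^p\text{-mass on ball}\bigr)^{\frac{1}{p}},
\]
and since $\overline{x}$ is constant on each half-open unit interval $[j,j+1)$ with value $|x_j|$, the key identity
\[
\int_{a-r}^{a+r}\overline{x}(t)^p\,dt \;=\; \sum_{j\in\mathbb{Z}}|x_j|^p\,\bigl|[j,j+1)\cap(a-r,a+r)\bigr|
\]
lets me transfer between continuous balls $B(a,r)=(a-r,a+r)\subset\mathbb{R}$ and discrete ones $S_{m,N}$. Throughout, I exploit $\tfrac{1}{q}-\tfrac{1}{p}\le 0$.

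\textbf{Direction \eqref{eq:ES-1}.} This direction is cleaner, so I would do it first. Given $m\in\mathbb{Z}$ and $N\in\mathbb{N}_0$, choose $a:=m+\tfrac{1}{2}$ and $r:=N+1$, so that $(a-r,a+r)=(m-N-\tfrac{1}{2},\,m+N+\tfrac{3}{2})$ strictly contains each $[j,j+1)$ for $j\in S_{m,N}$. Then $\int_{a-r}^{a+r}\overline{y}^p\,dt\ge\sum_{j\in S_{m,N}}|y_j|^p$. Since $1\le\tfrac{2r}{|S_{m,N}|}=\tfrac{2N+2}{2N+1}\le 2$ and $\tfrac{1}{q}-\tfrac{1}{p}\le 0$, I get $|S_{m,N}|^{\frac{1}{q}-\frac{1}{p}}\le 2^{\frac{1}{p}-\frac{1}{q}}(2r)^{\frac{1}{q}-\frac{1}{p}}$; multiplying by the mass and taking the supremum yields \eqref{eq:ES-1} with $C_2=2^{\frac{1}{p}-\frac{1}{q}}$.

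\textbf{Direction \eqref{eq:ES-0}.} Fix $a\in\mathbb{R}$, $r>0$, and define the consecutive integer interval $J(a,r):=\{j\in\mathbb{Z}:[j,j+1)\cap(a-r,a+r)\ne\emptyset\}$, which a direct count shows satisfies $|J(a,r)|\le 2r+2$ and, when $r\ge 1$, also $|J(a,r)|\ge 2r-1$. In the main case $r\ge 1$, I would pick $m\in\mathbb{Z}$ and $N\in\mathbb{N}_0$ with $J(a,r)\subseteq S_{m,N}$ and $|S_{m,N}|\le |J(a,r)|+1\le 2r+3$; the double-sided comparability $2r-1\le |S_{m,N}|\le 2r+3$ makes $|S_{m,N}|$ and $2r$ uniformly equivalent, so
\[
(2r)^{\frac{1}{q}-\frac{1}{p}}\Bigl(\int_{a-r}^{a+r}\overline{x}^p\Bigr)^{\frac{1}{p}}\;\le\; C\,|S_{m,N}|^{\frac{1}{q}-\frac{1}{p}}\Bigl(\sum_{j\in S_{m,N}}|x_j|^p\Bigr)^{\frac{1}{p}}\;\le\; C\|x\|_{\ell^p_q},
\]
using $|[j,j+1)\cap(a-r,a+r)|\le 1$. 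In the residual case $r<1$, each partial length is $\le\min(1,2r)$ and $|J(a,r)|\le 4$, so $\int_{a-r}^{a+r}\overline{x}^p\le 4\min(1,2r)\,\|x\|_{\ell^\infty}^p$; invoking Theorem \ref{t1b} to get $\|x\|_{\ell^\infty}\le\|x\|_{\ell^p_q}$, and noting that $(2r)^{\frac{1}{q}-\frac{1}{p}}\min(1,2r)^{\frac{1}{p}}$ stays bounded on $0<r<1$ (the worst case being $r\to 0^+$, where the product equals $(2r)^{\frac{1}{q}}\to 0$), finishes the estimate.

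\textbf{Main obstacle.} The only genuinely delicate step is the small-radius case of \eqref{eq:ES-0}: when $r<1$, $|S_{m,N}|$ cannot be made comparable to $r$ and the naive bound via a discrete ball loses. The cleanest remedy is to bypass the discrete sum and route through the pointwise bound $\|x\|_{\ell^\infty}\le\|x\|_{\ell^p_q}$ of Theorem \ref{t1b}, absorbing the prefactor $(2r)^{\frac{1}{q}-\frac{1}{p}}$ against the short integration length.
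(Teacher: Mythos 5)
Your argument is correct, but note that the paper does not actually prove this statement: Theorem \ref{ES-1} is imported verbatim from \cite[Theorem 2.1]{KS} and used as a black box, so there is no in-paper proof to compare against. What you have written is a complete, self-contained verification. The transfer identity $\int_{a-r}^{a+r}\overline{x}(t)^p\,dt=\sum_{j}|x_j|^p\,|[j,j+1)\cap(a-r,a+r)|$ is exactly right given the definition of $\overline{x}$, and both directions check out: for \eqref{eq:ES-1} the choice $a=m+\tfrac12$, $r=N+1$ makes $(a-r,a+r)$ contain $\bigcup_{j\in S_{m,N}}[j,j+1)$ and the ratio $(2N+2)/(2N+1)\in(1,2]$ together with $\tfrac1q-\tfrac1p\le 0$ gives $C_2=2^{\frac1p-\frac1q}$; for \eqref{eq:ES-0} with $r\ge 1$ the two-sided comparison $2r-1\le|S_{m,N}|\le 2r+3$ does make $|S_{m,N}|\sim 2r$ uniformly, and your handling of the genuinely delicate case $r<1$ — routing through $\|x\|_{\ell^\infty}\le\|x\|_{\ell^p_q}$ (Theorem \ref{t1b}(i), which is proved earlier and independently, so no circularity) and observing that $(2r)^{\frac1q-\frac1p}\min(1,2r)^{\frac1p}\le(2r)^{\frac1q}\vee 1$ stays bounded on $(0,1)$ — is exactly the right fix for the failure of the naive discrete comparison at small radii. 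One cosmetic remark: in the even-cardinality case $|J(a,r)|=2k$ you should say explicitly which center $m$ you take for $S_{m,k}$ (either endpoint-shifted choice works, since you only need containment), but this does not affect the estimate.
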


\bigskip

The analogous result for weak type discrete Morrey spaces is presented in the following.

\bigskip

\begin{thm}\label{ES-2}{\rm \cite[Theorem 2.3]{KS}}
Let $1\le p\le q<\infty$. Then, there exist positive constants $C_1$ and $C_2$ such
that for every $x\in w\ell^p_q$ and for every sequence $y=(y_j)_{j\in \mathbb{Z}}$ the inequalities
	\begin{align}\label{eq:ES-3}
	\|\overline{x}\|_{w\mathcal{M}^p_q}
	\le
	C_1 \|x\|_{w\ell^p_q}
	\end{align}
	and
\begin{align}\label{eq:ES-2}
	\|y\|_{w\ell^p_q}
\le C_2
\|\overline{y}\|_{w\mathcal{M}^p_q}
\end{align}
hold.
\end{thm}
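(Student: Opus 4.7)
My plan is to derive this weak-type equivalence as an almost immediate corollary of the strong-type version (Theorem~\ref{ES-1}), by freezing the truncation height, applying Theorem~\ref{ES-1} to the indicator sequence of a level set, and then taking a supremum.

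The key observation I would record first is that the level-set indicator of $\overline{x}$ is itself a step function of the form $\overline{(\cdot)}$: for every $\lambda>0$,
\[
\chi_{\{t\in\R:\,|\overline{x}(t)|>\lambda\}}
=\overline{\,1_{\{j\in\Z:\,|x_j|>\lambda\}}\,}.
\]
This is immediate, because $\overline{x}\equiv |x_j|$ on $[j,j+1)$, so the super-level set of $\overline{x}$ is precisely the disjoint union of those unit intervals $[j,j+1)$ for which $|x_j|>\lambda$.

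Granting this identity, for each fixed $\lambda>0$ I would apply Theorem~\ref{ES-1} to the $0/1$-valued sequence $1_{\{j:\,|x_j|>\lambda\}}$. Inequality \eqref{eq:ES-0} yields
\[
\|\chi_{\{|\overline{x}|>\lambda\}}\|_{\cM^p_q}
\le C_1\,\|1_{\{j:\,|x_j|>\lambda\}}\|_{\ell^p_q},
\]
while \eqref{eq:ES-1} yields the reverse inequality with constant $C_2$. Multiplying through by $\lambda$ and taking the supremum over $\lambda>0$, the rewritten formula for $\|\cdot\|_{w\ell^p_q}$ recorded just after Definition~\ref{def1}, together with the definition of $\|\cdot\|_{w\cM^p_q}$ as $\sup_{\lambda>0}\|\lambda\chi_{\{|f|>\lambda\}}\|_{\cM^p_q}$, deliver \eqref{eq:ES-3} and \eqref{eq:ES-2} respectively, with the same constants $C_1,C_2$ furnished by Theorem~\ref{ES-1}.

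I do not expect a real obstacle, and that is the point of the plan: the level-set identity reduces the weak-type claim to the strong-type one. The only bookkeeping to verify is that, whenever $\|x\|_{w\ell^p_q}<\infty$, the indicator sequence $1_{\{j:\,|x_j|>\lambda\}}$ genuinely sits in $\ell^p_q$ for every $\lambda>0$, so that Theorem~\ref{ES-1} is applicable; this follows from the definition of $\|x\|_{w\ell^p_q}$ as a supremum that dominates $\lambda\,\|1_{\{|x|>\lambda\}}\|_{\ell^p_q}$ for every fixed $\lambda$. Once the $\lambda$-homogeneity is tracked correctly through the supremum on both sides, the proof is essentially a two-line consequence of Theorem~\ref{ES-1}.
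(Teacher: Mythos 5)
Your argument is correct, but note that the paper itself offers no proof to compare it with: Theorem~\ref{ES-2} is imported verbatim from \cite[Theorem~2.3]{KS}, so your reduction is genuinely new content relative to this paper. The reduction is sound. The level-set identity $\chi_{\{t:\,|\overline{x}(t)|>\lambda\}}=\overline{1_{\{j:\,|x_j|>\lambda\}}}$ holds exactly as you say, because $\overline{x}\equiv|x_j|$ on $[j,j+1)$ and a $0/1$-valued sequence is unchanged by raising to the $p$-th power and taking $p$-th roots; applying \eqref{eq:ES-0} and \eqref{eq:ES-1} to these indicator sequences at each fixed $\lambda$, multiplying by $\lambda$, and taking suprema gives \eqref{eq:ES-3} and \eqref{eq:ES-2} with the same constants $C_1,C_2$. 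Two small bookkeeping points: first, the displayed reformulation of $\|x\|_{w\ell^p_q}$ that the paper records just after Definition~\ref{def1} is missing the factor $\gamma$ in front of the norm (compare the correct usage in the proof of Theorem~\ref{t2b}); you implicitly use the corrected version, which is the right thing to do. Second, your worry about whether $1_{\{j:\,|x_j|>\lambda\}}\in\ell^p_q$ is not really needed for \eqref{eq:ES-3} either, since \eqref{eq:ES-1} is stated for arbitrary sequences and \eqref{eq:ES-0} for a sequence outside $\ell^p_q$ would have an infinite right-hand side anyway; but your justification via the definition of the weak quasi-norm is correct and makes the application of Theorem~\ref{ES-1} airtight. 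What your approach buys is a self-contained derivation of the weak-type transfer principle from the strong-type one, rather than a citation to the (unpublished) preprint \cite{KS}.
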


We apply Theorems \ref{ES-1} and \ref{ES-2} with inclusion of Morrey spaces to recover
the inclusion of discrete Morrey spaces obtained in \cite{GKS}.

\bigskip

\begin{cry}{\rm \cite{GKS}}
Let $1\le p_1\le p_2 \le q<\infty$. Then $\ell^{p_2}_{q}\subseteq \ell^{p_1}_{q}$ and
$w\ell^{p_2}_{q} \subseteq w\ell^{p_1}_{q}$.
\end{cry}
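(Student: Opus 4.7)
The plan is to exploit Theorems~\ref{ES-1} and \ref{ES-2}, which realize $\ell^p_q$ and $w\ell^p_q$ as (essentially) closed subspaces of $\mathcal{M}^p_q(\mathbb{R})$ and $w\mathcal{M}^p_q(\mathbb{R})$ via the step-function $\overline{x}$, together with the classical inclusion property for continuous Morrey spaces on $\mathbb{R}$. In short, the strategy is: lift the sequence to a function, apply the continuous inclusion, and then pull back.

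For the strong-type statement, given $x\in \ell^{p_2}_q$ I would first apply \eqref{eq:ES-0} from Theorem~\ref{ES-1} to obtain $\overline{x}\in \mathcal{M}^{p_2}_q(\mathbb{R})$ with $\|\overline{x}\|_{\mathcal{M}^{p_2}_q}\le C_1\|x\|_{\ell^{p_2}_q}$. Next I would invoke the classical continuous inclusion $\mathcal{M}^{p_2}_q(\mathbb{R})\hookrightarrow \mathcal{M}^{p_1}_q(\mathbb{R})$ with operator norm at most $1$; this is a one-line consequence of H\"older's inequality on each interval $B(a,r)$, yielding
$$
|B(a,r)|^{\frac{1}{q}-\frac{1}{p_1}}\left(\int_{B(a,r)}|f|^{p_1}\right)^{\frac{1}{p_1}}\le |B(a,r)|^{\frac{1}{q}-\frac{1}{p_2}}\left(\int_{B(a,r)}|f|^{p_2}\right)^{\frac{1}{p_2}}.
$$
Finally, applying \eqref{eq:ES-1} of Theorem~\ref{ES-1} to $\overline{x}$, I would conclude
$$
\|x\|_{\ell^{p_1}_q}\le C_2\|\overline{x}\|_{\mathcal{M}^{p_1}_q}\le C_2\|\overline{x}\|_{\mathcal{M}^{p_2}_q}\le C_1 C_2 \|x\|_{\ell^{p_2}_q},
$$
so $x\in \ell^{p_1}_q$.

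For the weak-type inclusion I would follow exactly the same route, but with Theorem~\ref{ES-1} replaced by Theorem~\ref{ES-2}: lift $x\in w\ell^{p_2}_q$ to $\overline{x}\in w\mathcal{M}^{p_2}_q(\mathbb{R})$ via \eqref{eq:ES-3}, apply the analogous classical weak-Morrey inclusion $w\mathcal{M}^{p_2}_q(\mathbb{R})\hookrightarrow w\mathcal{M}^{p_1}_q(\mathbb{R})$, and transfer back via \eqref{eq:ES-2}. The weak-Morrey inclusion itself is obtained by applying the strong-Morrey inclusion just recalled to the characteristic function $\chi_{\{|f|>\lambda\}}$ and taking the supremum over $\lambda>0$.

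The only (very mild) obstacle is that the classical continuous Morrey inclusion $\mathcal{M}^{p_2}_q\subseteq \mathcal{M}^{p_1}_q$ is not stated in the excerpt, so I would need to write the one-line H\"older estimate above. Once that is in place, the corollary is simply the composition of three inequalities drawn from \cite{KS} and H\"older, with no genuine analysis required on the discrete side; this is exactly what makes the Morrey-to-$\ell$ transference of \cite{KS} attractive as a tool for recovering the inclusion results of \cite{GKS}.
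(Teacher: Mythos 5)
Your proof is correct and follows exactly the same route as the paper's: lift via Theorem~\ref{ES-1} (resp.\ Theorem~\ref{ES-2}), apply the continuous inclusion $\mathcal{M}^{p_2}_q\subseteq\mathcal{M}^{p_1}_q$ (resp.\ its weak analogue), and pull back. The only difference is that you spell out the H\"older step and the norm constants, which the paper leaves implicit.
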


\begin{proof}
Our proof is an alternative to that in \cite{GKS}.
Let $x\in \ell^{p_2}_{q}$. Then, it follows from \eqref{eq:ES-0} that $\overline{x}\in \mathcal{M}^{p_2}_{q}$.
Since $p_1\le p_2$, we have $\mathcal{M}^{p_2}_q \subseteq \mathcal{M}^{p_1}_q$, so that
$\overline{x}\in \mathcal{M}^{p_1}_{q}$. According to \eqref{ES-1}, we have
$x\in \ell^{p_1}_q$. Thus, $\ell^{p_2}_q\subseteq \ell^{p_1}_q$. To prove that
$w\ell^{p_2}_{q} \subseteq w\ell^{p_1}_{q}$, let $y\in w\ell^{p_2}_{q}$. Then, as a consequence
of \eqref{eq:ES-3}, we have $\overline{y} \in w\mathcal{M}^{p_2}_q$.
The inclusion $w\mathcal{M}^{p_2}_q \subseteq w\mathcal{M}^{p_1}_q$ implies $\overline{y} \in w\mathcal{M}^{p_1}_q$.
By virtue of \eqref{eq:ES-2}, we have
$y\in w\ell^{p_1}_{q}$. Thus, $w\ell^{p_2}_{q} \subseteq w\ell^{p_1}_{q}$.
\end{proof}

As a consequence of Theorems \ref{t1}, \ref{t2},  \ref{ES-1}, and \ref{ES-2}, we may
recover a necessary condition for the inclusion property of Morrey spaces and that of
weak Morrey spaces obtained in \cite{GHI}.

\bigskip

\begin{cry}{\rm \cite[Theorem 1.6]{GHI}}
Let $1\le p_1\le q<\infty$ and $1\le p_2\le q<\infty$.
\begin{enumerate}
\item If $\mathcal{M}^{p_2}_{q}\subseteq \mathcal{M}^{p_1}_q$, then $p_1\le p_2$.
\item If $w\mathcal{M}^{p_2}_{q}\subseteq w\mathcal{M}^{p_1}_q$, then $p_1\le p_2$.
\end{enumerate}
\end{cry}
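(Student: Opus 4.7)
The plan is to prove both items by contrapositive, transferring the desired inclusion on the continuous side to an inclusion on the discrete side and then invoking the necessary conditions already established in Theorems~\ref{t1} and~\ref{t2}. The mechanism for this transfer is the realization map $x\mapsto\overline{x}$ and the two-sided norm equivalences supplied by Theorems~\ref{ES-1} and~\ref{ES-2}.

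For (1), I would assume $\mathcal{M}^{p_2}_q(\mathbb{R})\subseteq \mathcal{M}^{p_1}_q(\mathbb{R})$ and show $\ell^{p_2}_q\subseteq\ell^{p_1}_q$. Take any $x\in\ell^{p_2}_q$. By the first estimate \eqref{eq:ES-0} in Theorem~\ref{ES-1}, $\overline{x}\in\mathcal{M}^{p_2}_q(\mathbb{R})$, hence by hypothesis $\overline{x}\in\mathcal{M}^{p_1}_q(\mathbb{R})$. Then the second estimate \eqref{eq:ES-1} in Theorem~\ref{ES-1}, applied to the sequence $x$ itself, yields $\|x\|_{\ell^{p_1}_q}\le C_2\|\overline{x}\|_{\mathcal{M}^{p_1}_q}<\infty$, so $x\in\ell^{p_1}_q$. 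This gives the inclusion $\ell^{p_2}_q\subseteq\ell^{p_1}_q$, and Theorem~\ref{t1} then forces $p_1\le p_2$.

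For (2), I would run exactly the same argument but with Theorem~\ref{ES-2} in place of Theorem~\ref{ES-1} and Theorem~\ref{t2} in place of Theorem~\ref{t1}. Starting from $y\in w\ell^{p_2}_q$, inequality \eqref{eq:ES-3} gives $\overline{y}\in w\mathcal{M}^{p_2}_q$, the hypothesis pushes $\overline{y}\in w\mathcal{M}^{p_1}_q$, and inequality \eqref{eq:ES-2} returns $y\in w\ell^{p_1}_q$. Hence $w\ell^{p_2}_q\subseteq w\ell^{p_1}_q$, and Theorem~\ref{t2} yields $p_1\le p_2$.

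There is essentially no obstacle beyond checking that the chain of implications is valid: the realization map provides a bijective correspondence between sequences and the particular class of step functions $\overline{x}$, and the two-sided inequalities in Theorems~\ref{ES-1}--\ref{ES-2} are precisely what is needed to read set-theoretic containment on the continuous side as set-theoretic containment on the discrete side. The only mild subtlety to verify is that the inclusion hypothesis is used only set-theoretically (no norm bound is required), so that the implicit continuity of the embedding plays no role. Once this is observed, both items reduce immediately to the discrete necessary conditions proved earlier.
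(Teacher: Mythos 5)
Your argument is correct and coincides with the paper's own proof: both transfer the continuous inclusion to the discrete one via the realization map $x\mapsto\overline{x}$ using Theorems \ref{ES-1} and \ref{ES-2}, and then invoke Theorems \ref{t1} and \ref{t2}. No issues.
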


\begin{proof}
\

\begin{enumerate}
\item Let $x\in \ell^{p_2}_{q}$. Then, according to Theorem \ref{ES-1}, we have $\overline{x}\in \mathcal{M}^{p_2}_{q}$.
Since $\mathcal{M}^{p_2}_{q}\subseteq \mathcal{M}^{p_1}_q$,
we obtain $\overline{x} \in \mathcal{M}^{p_1}_{q}$.
This fact and \eqref{eq:ES-1} imply $x\in \ell^{p_1}_q$. Therefore, $\ell^{p_2}_q\subseteq \ell^{p_1}_q$.
Consequently, by virtue of Theorem \ref{t1}, we conclude that $p_1\le p_2$.
\item  Let $x\in w\ell^{p_2}_{q}$. By virtue of Theorem \ref{ES-2}, we have $\overline{x}\in
w\mathcal{M}^{p_2}_{q}$, so that $\overline{x} \in w\mathcal{M}^{p_1}_{q}$. Combining this with \eqref{eq:ES-2},
we obtain $x\in w\ell^{p_1}_q$. Consequently, $\ell^{p_2}_q\subseteq \ell^{p_1}_q$.
Thus, the conclusion follows from Theorem \ref{t2}.
\end{enumerate}
\end{proof}

Finally, we present our main result in this section, namely a necessary condition for the proper inclusion
relation between weak type discrete Morrey spaces and discrete Morrey spaces.

\bigskip

\begin{thm}\label{thm:190319}
Let $1\le p_1\le q<\infty$ and $1\le p_2\le q<\infty$.
If the inclusion $w\ell^{p_2}_q \subseteq \ell^{p_1}_q$ is proper, then $p_1<p_2$.
\end{thm}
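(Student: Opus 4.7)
I plan to prove this by contrapositive, reducing the weak-to-strong hypothesis to the strong-to-strong setting of Theorem~\ref{t1} so that no new separating sequence needs to be constructed. Suppose that the inclusion $w\ell^{p_2}_q \subseteq \ell^{p_1}_q$ is proper; in particular, it holds as a set-theoretic inclusion.

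The first step is to chain this hypothesis with the elementary embedding $\ell^{p_2}_q \subseteq w\ell^{p_2}_q$ from \cite[Theorem~3.2]{GKS} to obtain
\[
\ell^{p_2}_q \subseteq w\ell^{p_2}_q \subseteq \ell^{p_1}_q.
\]
The strong-to-strong inclusion $\ell^{p_2}_q \subseteq \ell^{p_1}_q$ now falls under the hypothesis of Theorem~\ref{t1}, which forces $p_1 \le p_2$.

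The second step is to exclude the borderline case $p_1 = p_2$. If $p_1 = p_2 =: p$, then the hypothesis reads $w\ell^{p}_q \subseteq \ell^{p}_q$; combined with the reverse embedding $\ell^{p}_q \subseteq w\ell^{p}_q$ this forces the equality $w\ell^p_q = \ell^p_q$, contradicting the strictness built into the word ``proper''. Hence $p_1 < p_2$.

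I do not expect any real obstacle; the essence of the argument is the repackaging in the first step, which lets one re-use Theorem~\ref{t1}, followed by the short equality argument to rule out $p_1 = p_2$. A more hands-on alternative would be to directly exhibit a counterexample in the case $p_1 > p_2$ by invoking the explicit sequence constructed in the proof of Theorem~\ref{t1}: that sequence lies in $\ell^{p_2}_q \subseteq w\ell^{p_2}_q$ but not in $\ell^{p_1}_q$, so it already breaks the inclusion; the borderline $p_1 = p_2$ would still require the same equality argument above.
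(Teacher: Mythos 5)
Your proof is correct, but it takes a genuinely different route from the paper's. The paper argues by contradiction in a single stroke: assuming $p_1\ge p_2$, it takes a witness $x\in \ell^{p_1}_q\setminus w\ell^{p_2}_q$ of properness and shows it cannot exist, because $\ell^{p_1}_q\subseteq \ell^{p_2}_q\subseteq w\ell^{p_2}_q$; the first of these inclusions is obtained by transferring to the continuous setting via Theorem~\ref{ES-1} and the H\"older-type inclusion $\mathcal{M}^{p_1}_q\subseteq\mathcal{M}^{p_2}_q$, in keeping with the theme of Section~\ref{s3}. You instead split the conclusion into two pieces: first you chain $\ell^{p_2}_q\subseteq w\ell^{p_2}_q\subseteq\ell^{p_1}_q$ and invoke the necessity result Theorem~\ref{t1} to get $p_1\le p_2$, and then you rule out $p_1=p_2$ by observing that properness of $w\ell^{p}_q\subseteq\ell^{p}_q$ is incompatible with the standing embedding $\ell^{p}_q\subseteq w\ell^{p}_q$. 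Both arguments are sound. What yours buys is that it stays entirely within the discrete framework of Section~\ref{s2} and needs no machinery from \cite{KS}; the price is that it leans on the \emph{hard} direction, Theorem~\ref{t1}, whose proof required the elaborate counterexample sequence \eqref{eq:2}, whereas the paper's argument only uses the easy sufficiency direction (that $p_2\le p_1$ implies $\ell^{p_1}_q\subseteq\ell^{p_2}_q$, already available as \eqref{eq:170320-1}) and disposes of the borderline case $p_1=p_2$ without a separate step. Your closing remark about reusing the explicit sequence from Theorem~\ref{t1} is also consistent, though as a standalone argument it would inherit that construction's restriction to $p_1<q$ and would still need your equality step for $p_1=p_2$.
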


\bigskip

\begin{rem}
Note that Theorem \ref{thm:190319} can be viewed as a complement to Theorem \ref{t8}.
The analogous result in the case of Morrey spaces can be found in \cite{GHI, GHNS}.
\end{rem}

\begin{proof}[Proof of Theorem \ref{thm:190319}]
Assume to the contrary that $p_1\ge p_2$.
Let $x\in \ell^{p_1}_q \setminus w\ell^{p_2}_q$.
It follows from \eqref{eq:ES-0} that $\overline{x}\in \mathcal{M}^{p_1}_{q}$. Since $p_1\ge p_2$, we have
$\mathcal{M}^{p_1}_q \subseteq \mathcal{M}^{p_2}_q$, so that
$\overline{x}\in \mathcal{M}^{p_2}_{q}$.
Combining this with \eqref{eq:ES-1}, we get $x\in \ell^{p_2}_q$. Consequently, $x\in w\ell^{p_2}_q$.
This contradicts the fact that $x\in \ell^{p_1}_q \setminus w\ell^{p_2}_q$.
Thus, we must have $p_1<p_2$.
\end{proof}

\noindent{\bf Acknowledgement}. The first and second authors are supported by
P3MI--ITB Research and Innovation Program 2018.


\begin{thebibliography}{9}\small

\bibitem{GHLM}
H. Gunawan, D.I. Hakim, K.M. Limanta, and A.A. Masta, ``Inclusion
properties of generalized Morrey spaces'', \emph{Math. Nachr.} {\bf 290}
(2017), 332--340. [DOI: 10.1002/mana.201500425].

\bibitem{GHI}
H. Gunawan, D.I. Hakim, and M. Idris, ``Proper inclusions of Morrey spaces'',
\emph{Glas. Mat. Ser. III} {\bf 53}(73) (2018), no. 1, 143--151.

\bibitem{GKS}
H. Gunawan, E. Kikianty, and C. Schwanke, ``Discrete Morrey spaces and their
inclusion properties'', \emph{Math. Nachr.} {\bf 291} (2018), no. 8-9, 1283--1296.

\bibitem{GS}
H. Gunawan and C. Schwanke, ``The Hardy-Littlewood maximal operator on
discrete Morrey spaces'', to appear in \emph{Mediter. J. Math.}

\bibitem{GHNS}
H. Gunawan, D.I. Hakim, E. Nakai, and Y. Sawano,
``On inclusion relation between weak Morrey spaces and Morrey spaces'',
\emph{Nonlinear Anal.} {\bf 168} (2018), 27-31.


\bibitem{HS}
D.D. Haroske and L. Skrzypczak, ``Embeddings of weighted Morrey spaces'', Math. Nachr. {\bf 290}(7) (2017), 1066--1086

\bibitem{HS2}
D.D. Haroske and L. Skrzypczak, ``Morrey sequence spaces: Pitt's theorem and
compact embeddings'', Constr. Approx. (2019). https://doi.org/10.1007/s00365-019-09460-7

\bibitem{KS}
E. Kikianty and C. Schwanke, ``Discrete Morrey spaces are closed subspaces
of their continuous counterparts'', preprint.

\bibitem{P}
L.C. Piccinini, \emph{Propriet\"a di Inclusione e Interpolazione tra
Spazi di Morrey e loro Generalizzazioni}.
Tesi di perfezionamento, Scuola Normale Superior Pisa, 1969.

\bibitem{R}
M. Rosenthal, \emph{Morrey-R\"aume aus der Sicht der Harmonischen Analysis},
Master thesis, Friedrich-Schiller-Universit\"at Jena, 2009.

\bibitem{ST}
Y. Sawano and H. Tanaka, ``Morrey spaces for non-doubling measures'',
\emph{Acta Math. Sin. (Engl. Ser.)} {\bf 21} (2005), no. 6, 1535--1544.

\end{thebibliography}
\end{document}